\definecolor{my_color}{rgb}{0,0.5,0.5}
\definecolor{MIXT}{rgb}{0.8,0.5,0.2}
\numberwithin{equation}{section}
\font\tencyr=wncyr10 
\font\tencyi=wncyi10 
\font\tencysc=wncysc10 
\def\rus{\tencyr\cyracc}
\def\rusi{\tencyi\cyracc}
\def\rusc{\tencysc\cyracc}
\newtheorem{thm}{Theorem}[section]
\newtheorem{lm}[thm]{Lemma}
\newtheorem{cl}[thm]{Corollary}
\newtheorem{prop}[thm]{Proposition}
\newtheorem{qtn}{Question}
\theoremstyle{remark}
\newtheorem{rmk}[thm]{Remark}
\theoremstyle{definition}
\newtheorem{ex}[thm]{Example} 
\newtheorem{opr}{Definition}
\theoremstyle{plain}
\newcommand {\be}{{\mathfrak b}}
\newcommand {\g}{{\mathfrak g}}
\newcommand {\h}{{\mathfrak h}}
\newcommand {\p}{{\mathfrak p}}
\newcommand {\q}{{\mathfrak q}}
\newcommand {\es}{{\mathfrak s}}
\newcommand {\te}{{\mathfrak t}}
\newcommand {\ut}{{\mathfrak u}}
\newcommand {\z}{{\mathfrak z}}
\newcommand {\sln}{{\mathfrak{sl}}_n}
\newcommand {\slv}{{\mathfrak{sl}}(\BV)}
\newcommand {\eus}{\EuScript}
\newcommand {\gN}{{\eus N}}
\newcommand {\gS}{{\eus S }}
\newcommand {\eps}{\epsilon}
\newcommand {\lb}{\lambda}
\newcommand {\ca}{{\mathcal A}}
\newcommand {\ck}{{\mathcal K}}
\newcommand {\N}{{\mathcal N}}
\newcommand {\co}{{\mathcal O}}
\newcommand {\BV}{{\mathbb V}}
\newcommand {\BZ}{{\mathbb Z}}
\newcommand {\BN}{{\mathbb N}}
\newcommand {\md}{/\!\!/}
\newcommand {\codim}{{\mathrm{codim\,}}}
\newcommand {\ind}{{\mathrm{ind\,}}}
\newcommand {\Lie}{{\mathsf{Lie\,}}}
\newcommand {\Ima}{{\mathrm{Im\,}}}
\newcommand {\Mor}{\operatorname{Mor}}
\newcommand {\rk}{{\mathrm{rk\,}}}
\newcommand {\spe}{{\mathsf{Spec\,}}}
\newcommand {\trdeg}{{\mathrm{tr.deg\,}}}
\newcommand {\GR}[2]{{\textrm{{\bf #1}}}_{#2}}
\renewcommand {\GR}[2]{{\mathsf{#1}}_{#2}}
\newcommand {\un}{\underline}
\newcommand {\qm}{{\q\langle m\rangle}}
\newcommand {\Qm}{{Q\langle m\rangle}}
\newcommand {\qq}{{\q\langle 1\rangle}}
\newcommand {\QQ}{{Q\langle 1\rangle}}
\newcommand {\NN}{{\N\langle 1\rangle}}
\newcommand {\NNN}{{\N\langle 1,1\rangle}}
\newcommand {\gNN}{{\gN\langle 1\rangle}}
\newcommand {\gNNN}{{\gN\langle 1,1\rangle}}
\newcommand {\gm}{{\g\langle m\rangle}}
\newcommand {\Gm}{{G\langle m\rangle}}
\newcommand {\go}{{\g\langle 1\rangle}}
\newcommand {\goo}{{\g\langle 1,1\rangle}}
\newcommand {\bxi}{{\boldsymbol{\xi}}}
\newcommand {\bb}{{\boldsymbol{b}}}
\newcommand {\bx}{\boldsymbol{x}}
\newcommand {\beq}{\begin{equation}}
\newcommand {\eeq}{\end{equation}}
\renewcommand{\le}{\leqslant}
\renewcommand{\ge}{\geqslant}
\newcommand {\bbk}{\Bbbk}
\begin{document}
\setlength{\parskip}{3pt plus 2pt minus 0pt}
\hfill { {\color{blue}\scriptsize October 9, 2017}}
\vskip1ex

\title
{Takiff algebras with polynomial rings of symmetric invariants}
\author[D.\,Panyushev]{Dmitri I. Panyushev}
\address[D.\,Panyushev]%
{Institute for Information Transmission Problems of the R.A.S, Bolshoi Karetnyi per. 19, 
Moscow 127051, Russia}
\email{panyushev@iitp.ru}
\author[O.\,Yakimova]{Oksana S.~Yakimova}
\address[O.\,Yakimova]{Universit\"at zu K\"oln,
Mathematisches Institut, Weyertal 86-90, 50931 K\"oln, Deutschland}
\email{yakimova.oksana@uni-koeln.de}
\thanks{The research of the first author was carried out at the IITP RAS at the expense of the Russian Foundation for Sciences (project {\rus N0} 14-50-00150).  The second author is supported by  
the DFG (Heisenberg--Stipendium).}
\keywords{index of Lie algebra, coadjoint representation, symmetric invariants}
\subjclass[2010]{14L30, 17B08, 17B20, 22E46}
\maketitle

\section*{Introduction}

\noindent
The ground field $\bbk$ is algebraically closed and of characteristic $0$. Let $Q$ be a connected 
algebraic group with $\q=\Lie Q$ and $\gS(\q)=\bbk[\q^*]$ the symmetric algebra of $\q$.
The subalgebra of $Q$-invariants in $\bbk[\q^*]$ is denoted by $\bbk[\q^*]^Q$ or $\bbk[\q^*]^\q$. 
The elements of $\bbk[\q^*]^Q$ are called {\it symmetric invariants of\/} $\q$. 
Interesting classes of non-reductive groups $Q$ such that $\bbk[\q^*]^Q$ is a polynomial ring have 
recently been found, see e.g.~\cite{J,coadj,trio,alafe1,alafe2,MZ,Y}. A quest for this type of groups continues. 
Let $\qm:=\q\otimes \bbk[T]/(T^{m+1})$ be the {\it $m$-th Takiff algebra} (=\,a {\it truncated current 
algebra}) of $\q$. Since $\q\langle 0\rangle\simeq \q$, we may 
assume that $m\ge 1$. Our main result is that under a mild restriction, the passage from $\q$ to  $\qm$ 
preserves the polynomiality of symmetric invariants. We also 
{\it\bfseries (1)} discover a new phenomenon that a certain ideal of $\qm$ 
has a polynomial ring of invariants in $\bbk[\qm^*]$, and
{\it\bfseries (2)} show that the property of $\q$ that $\bbk[\q^*]$ is a free  $\bbk[\q^*]^Q$-module does not
always extend to $\qq$.

The story began in 1971, when Takiff proved that if $\g$ is semisimple, then
$\g\langle 1\rangle=\g\ltimes\g^{\sf ab}$ has a polynomial ring of symmetric invariants whose Krull 
dimension equals $2{\cdot}\rk\g$~\cite{takiff}. Then Ra\"is and Tauvel proved a similar result for 
$\g\langle m\rangle$ with arbitrary $m\in \BN$~\cite{rt}. This 
is the classical analogue of the description of the  
Feigin-Frenkel centre $\z(\widehat{\g})\subset {\mathcal U}(t^{-1}\g[t^{-1}])$, see~\cite{ff}.
Recently, Macedo and Savage came up with 
a multi-parameter generalisation of the Ra\"is-Tauvel result. Namely, let
\beq      \label{eq:multi-car}
    \hat\g=\g\otimes \bbk[T_1,\dots,T_r]/(T_1^{m_1+1},\dots,T_r^{m_r+1})=:\g\langle m_1,\dots,m_r\rangle 
\eeq
be a {\it truncated multi-current algebra} of a {\bf semisimple} $\g$. Then $\bbk[\hat\g^*]^{\hat \g}$ is a 
polynomial ring of Krull dimension $(m_1+1)\dots (m_r+1){\cdot}\rk\g$, see~\cite{savage}. 
The proofs heavily use the fact that $\g$ is semisimple, when many structure results are available. For 
instance, both~\cite{rt} and \cite{savage} exploit Kostant's section for the set of the 
regular elements of $\g$.
On the other hand, if $\g$ is simple and $\q=\g_e$ is the centraliser of a nilpotent element $e\in\g$
such that $\g_e$ has the ``{\sl codim}--$2$ property'' and $e$ admits a ``good generating system''
in $\bbk[\g]^G$, then $\bbk[\g_e\langle m\rangle^*]^{\g_e\langle m\rangle}$ is a polynomial ring for all
$m\in\BN$, see~\cite[Theorem\,3.1]{ap}.
In all these cases, the free generators of the ring of  symmetric invariants of $\hat\g$ or 
$\g_e\langle m\rangle$ are explicitly described via those of $\g$ or $\g_e$, respectively. This goes back to a general construction 
of~\cite{rt}. 

Our main theorem provides a substantial generalisation of all these partial results. 
To state it, we need some notation. The {\it index\/} of $\q$, $\ind\q$, is the minimal codimension of the $Q$-orbits in $\q^*$, hence $\ind\q=\rk\q$ if $\q$ is reductive.
Let $\textsl{d}f$ be the differential of $f\in \bbk[\q^*]$. We regard $\textsl{d}f$ as a polynomial mapping 
from $\q^*$ to $\q$ and write $(\textsl{d}f)_{\xi}$ for its value at $\xi\in\q^*$.  If $f\in\bbk[\q^*]^Q$, then
$\textsl{d}f$  is $Q$-equivariant. The image of 
$\q\otimes T+\dots +\q\otimes T^{m}$ in $\qm$ is an ideal  of codimension $\dim\q$, which is denoted 
by $\qm^u$. An open subset of an irreducible variety is called {\it big}, if its complement does not contain 
divisors. Then a brief version of our  result is

\begin{thm}   \label{thm:main}
Let $\q$ be an algebraic Lie algebra such that $\bbk[\q^*]^\q=\bbk[f_1,\dots,f_l]$ is a graded polynomial 
ring, where $l=\ind\q$. Set 
$\Omega_{\q^*}=\{\xi\in\q^*\mid (\textsl{d}f_1)_\xi\wedge\dots \wedge(\textsl{d}f_l)_\xi\ne 0\}$, and 
assume that  $\Omega_{\q^*}$ is big (in $\q^*$). For any $m\ge 1$, we then have
\begin{itemize}
\item[\sf (i)] \ $\bbk[\qm^*]^{\qm^u}$ is a graded polynomial ring of Krull dimension $\dim\q+ml$.
\item[\sf (ii)] \  the Takiff algebra $\q\langle m\rangle$ has the same properties as $\q$, i.e., 
$\bbk[\qm^*]^{\qm}$ is a graded polynomial ring of Krull dimension $(m+1)l=\ind\qm$ and the similarly 
defined subset $\Omega_{\qm^*}\subset\qm^*$ is also big.
\end{itemize}
\end{thm}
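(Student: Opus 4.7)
My plan is to use the Ra\"{\i}s--Tauvel construction. For each generator $f_j$ of degree $d_j$, write $\xi=\sum_{i=0}^{m}\xi_i(T^i)^{*}\in\qm^{*}$ and expand
\[
  f_j\bigl(\xi_0+T\xi_1+\cdots+T^m\xi_m\bigr) \;=\; \sum_{k=0}^{md_j} f_j^{(k)}(\xi)\,T^k.
\]
The standard Ra\"{\i}s--Tauvel argument shows that the top $m+1$ coefficients of $f_j(\xi(T))$ are $\qm$-invariant; set $F_{j,k'}:=f_j^{\,(m(d_j-1)+k')}$ for $k'=0,\dots,m$, and observe that $F_{j,m}=f_j(\xi_m)$. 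For part \textsf{(ii)} the $(m+1)l$ polynomials $F_{j,k'}$ are my candidate generators. For part \textsf{(i)}, the component $\xi_m$ is $\qm^u$-fixed (since $(\ads(xT^j)\xi)_m=\ads(x)\xi_{m+j}=0$ for $j\ge 1$), so the $\dim\q$ coordinate functions of $\xi_m\in\q^{*}$ are $\qm^u$-invariants; combined with $F_{j,k'}$ for $k'=0,\dots,m-1$ (the top $F_{j,m}$ being redundant as a function of $\xi_m$), this yields $\dim\q+ml$ candidates.

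The key computation is the differential
\[
  (\textsl{d}F_{j,k'})_\xi \;=\; (\textsl{d}f_j)_{\xi_m}\,T^{k'} \;+\; \sum_{i=k'+1}^{m} D_{j,\,m(d_j-1)+k'-i}\,T^i \;\in\;\qm,
\]
where $D_{j,l}$ is the $T^l$-coefficient of $(\textsl{d}f_j)(\xi(T))$. In particular, $(\textsl{d}F_{j,k'})_\xi$ vanishes in columns $T^{j'}$ with $j'<k'$. When $\xi_m\in\Omega_{\q^{*}}$, the $l$ vectors $(\textsl{d}f_j)_{\xi_m}$ are linearly independent in $\q$, and a column-by-column pivoting---starting at $T^0$ and sweeping upward, at each column $T^{k'}$ isolating the $l$ rows $F_{1,k'},\dots,F_{l,k'}$ whose leading entries there are $(\textsl{d}f_j)_{\xi_m}$---proves the linear independence of the $(m+1)l$ differentials in part \textsf{(ii)}. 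For part \textsf{(i)}, the coordinate differentials of $\xi_m$ span $\q\otimes T^m\subset\qm$, and the same pivoting on columns $T^0,\dots,T^{m-1}$ establishes linear independence of the $ml$ remaining differentials modulo $\q\otimes T^m$. Hence the Jacobian attains maximal rank on $\pi_m^{-1}(\Omega_{\q^{*}})$, where $\pi_m:\qm^{*}\to\q^{*}$, $\xi\mapsto\xi_m$. Since $\pi_m$ is a trivial vector bundle and $\Omega_{\q^{*}}$ is big, $\pi_m^{-1}(\Omega_{\q^{*}})$ is big in $\qm^{*}$; this simultaneously gives the algebraic independence of the candidates, the bigness of $\Omega_{\qm^{*}}$, and---via a standard generic-stabiliser count for the coadjoint action---the identity $\ind\qm=(m+1)l$.

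To upgrade algebraic independence to generation of the full invariant ring, I would invoke a polynomiality criterion \`a la Panyushev--Premet--Yakimova/Skryabin: algebraically independent homogeneous invariants with a big Jacobian locus, whose cardinality matches the transcendence degree of the ambient invariant ring, must freely generate it as a polynomial algebra. For part \textsf{(ii)} this applies directly with $N=(m+1)l=\ind\qm$.

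The main technical obstacle, in my view, is part \textsf{(i)}: polynomiality for invariants under a nilpotent Lie \emph{ideal} of $\qm$, rather than under $\qm$ itself, is less classical, and a careful argument is needed to rule out hidden $\qm^u$-invariants beyond the candidate subring. Given the big Jacobian locus, this should reduce to a normality/codim-$2$ argument showing that the candidate subring is integrally closed in $\bbk[\qm^{*}]^{\qm^u}$ of the correct Krull dimension $\dim\q+ml$, and hence exhausts the full $\qm^u$-invariant ring.
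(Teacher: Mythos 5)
Your proposal follows essentially the same route as the paper's proof: the Ra\"is--Tauvel coefficient invariants $F_i^j$, the block-triangular Jacobian whose diagonal blocks are the $(\textsl{d}f_i)_{\xi_m}$, the observation that the good locus is the preimage of $\Omega_{\q^*}$ under $\bxi\mapsto\xi_m$ and hence big, and the Panyushev--Premet--Yakimova algebraic-closedness criterion combined with a transcendence-degree count (the paper derives (ii) from (i) by taking $Q$-invariants, whereas you apply the criterion to (ii) directly, but this is immaterial). The one step you explicitly leave open in part (i) --- bounding $\trdeg \bbk[\qm^*]^{\Qm^u}$ by $\dim\q+ml$ so that no hidden invariants survive --- is settled in the paper by a one-line orbit computation: for $\bxi=(0,\dots,0,\xi_m)$ with $\xi_m\in\q^*_{\sf reg}$ one has $\dim \Qm^u{\cdot}\bxi=m(\dim\q-l)$, whence the algebraically closed candidate subalgebra is all of $\bbk[\qm^*]^{\Qm^u}$.
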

\noindent
(See also Theorem~\ref{thm:main1} for a description of free generators and $\Omega_{\qm^*}$.)
As is well-known, a semisimple Lie algebra $\g$ satisfies the assumptions of Theorem~\ref{thm:main}.
(This goes back to Chevalley and Kostant.)  
Therefore, Theorem~\ref{thm:main} yields another proof and a generalisation of~\cite[Theorem\,5.4]{savage}, 
see Corollary~\ref{cl:new-proof-gener}. A notable difference between our  Theorem~\ref{thm:main} and 
results of~\cite{ap} is that we do not impose a constraint on $\sum_i\deg f_i$, which is a part of the 
definition of a ``good generating system'', and do not require the {\sl codim}--$2$ property for $\q$ (see 
Section~\ref{sect:prelim} for the definition). A weaker assumption that $\Omega_{\q^*}$ is big appears to 
be sufficient. That is, our result applies to a larger supply of non-reductive Lie algebras, 
see examples in Sections~\ref{sect:prehom} and \ref{sect:ex-from-kot}. For instance, 
the {\it canonical truncation}, $\tilde\q$, of a Frobenius Lie algebra $\q$ satisfies the hypotheses of 
Theorem~\ref{thm:main}, see Section~\ref{subs:Frob}.

If $\g$ is {\bf semisimple}, then $\bbk[\gm^*]$ is a free $\bbk[\gm^*]^{\gm}$-module for any 
$m$~\cite[Appendix]{must}. In Section~\ref{sect:Eq}, we prove that this property does not generalise 
to the truncated multi-current algebras of $\g$ or the truncated current algebras $\qm$ for arbitrary $\q$ such 
that $\bbk[\qm^*]$ is a free $\bbk[\qm^*]^{\qm}$-module. Namely, $\bbk[\g\langle 1,1,1\rangle]$ is {\bf not} 
a free $\bbk[\g\langle 1,1,1\rangle]^{\g\langle 1,1,1\rangle}$-module (Theorem~\ref{thm:eq-multi}). This can also be interpreted as 
follows.
Since the passage $\g\leadsto \go$ preserve freeness of the module \cite{geof,must},
in the chain of Takiff extensions
\[
   \g\leadsto \go \leadsto  \go\langle 1\rangle \simeq \goo \leadsto \goo\langle 1\rangle \simeq 
   \g\langle 1,1,1\rangle,
\]
we loose the freeness of the module at the second or third step (conjecturally, at the third step!). This also implies that, for
$\g\langle 1,1,1,\dots,1\rangle=:\g\langle 1^r\rangle$ and every $r\ge 3$, 
$\bbk[\g\langle 1^r\rangle]$ is not a free module over the ring of symmetric invariants.

{\sl \un{ Notation}.}
Let $Q$ act on an irreducible affine variety $X$. Then $\bbk[X]^Q$ 
is the algebra of $Q$-invariant regular functions on $X$ and $\bbk(X)^Q$
is the field of $Q$-invariant rational functions. If $\bbk[X]^Q$ is finitely generated, then 
$X\md Q:=\spe \bbk[X]^Q$, and the {\it quotient morphism\/} $\pi_{Q}: X\to X\md Q$ is induced by
the inclusion $\bbk[X]^Q \hookrightarrow \bbk[X]$. If $\bbk[X]^Q$ is a graded polynomial ring, then
the elements of any set of algebraically independent homogeneous generators 
are called {\it basic invariants\/}. If $V$ is a $Q$-module and $v\in V$, then 
$\q_v=\{\zeta\in\q\mid \zeta{\cdot}v=0\}$ is the {\it stabiliser\/} of $v$ in $\q$ and 
$Q_v=\{s\in Q\mid s{\cdot}v=v\}$ is the {\it isotropy group\/} of $v$ in $Q$;  $H^o$ is the identity component of an algebraic group $H$.

\section{Preliminaries on the coadjoint representation}
\label{sect:prelim}

\noindent
Let $Q$\/ be a connected affine algebraic group with Lie algebra $\q$. The symmetric algebra 
$\gS (\q)$ over $\bbk$ is identified with the graded algebra of polynomial functions on $\q^*$ and we also 
write $\bbk[\q^*]$ for it.  

The {\it index of}\/ $\q$, $\ind\q$, is the minimal codimension of $Q$-orbits in $\q^*$. Equivalently,
$\ind\q=\min_{\xi\in\q^*} \dim \q_\xi$. By Rosenlicht's theorem~\cite[2.3]{VP}, one also has
$\ind\q=\trdeg\bbk(\q^*)^Q$. The ``magic number'' associated with $\q$ is $\bb(\q)=(\dim\q+\ind\q)/2$. Since the coadjoint orbits are even-dimensional, the magic number is an integer. If $\q$ is reductive, then  
$\ind\q=\rk\q$ and $\bb(\q)$ equals the dimension of a Borel subalgebra. The Poisson bracket $\{\ ,\ \}$ in
$\bbk[\q^*]$ is defined on the elements of degree $1$ (i.e., on $\q$) by $\{x,y\}:=[x,y]$. 
The {\it centre\/} of the Poisson algebra $\gS(\q)$ is $\gS(\q)^\q=\{H\in \gS(\q)\mid \{H,x\}=0 \ \ \forall x\in\q\}$. Since $Q$ is connected, we also have $\gS(\q)^\q=\gS(\q)^{Q}=\bbk[\q^*]^Q$.

The set of $Q$-{\it regular\/} elements of $\q^*$ is 
$\q^*_{\sf reg}=\{\eta\in\q^*\mid \dim Q{\cdot}\eta\ge \dim Q{\cdot}\eta' \text{ for all }\eta'\in\q^*\}$.
We say that $\q$ has the {\sl codim}--$n$ property if $\codim (\q^*\setminus\q^*_{\sf reg})\ge n$.
The following useful result appears in~\cite[Theorem\,1.2]{coadj}:

\begin{thm}   \label{thm:coadj1.2}
Suppose that $\q$ has the {\sl codim}--$2$ property and there are homogeneous algebraically
independent $f_1,\dots,f_l\in \bbk[\q^*]^Q$ such that $l=\ind\q$ and $\sum_{i=1}^l \deg f_i=\bb(\q)$.
Then 
\begin{itemize}
\item[\sf (i)] \ $\bbk[\q^*]^Q=\bbk[f_1,\dots,f_l]$ and 
\item[\sf (ii)] \ $(\textsl{d}f_1)_\xi,\dots,(\textsl{d}f_l)_\xi$ are linearly
independent if and only if\/  $\xi\in\q^*_{\sf reg}$.
\end{itemize}
\end{thm}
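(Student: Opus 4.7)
My plan is to prove (ii) first using the degree hypothesis alone, and then derive (i) from (ii) together with the codim-$2$ property. The central tool is an explicit Pfaffian identity linking the Jacobian of the $f_i$ to the matrix of the Lie--Poisson bracket.

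Fix a basis $e_1,\ldots,e_n$ of $\q$ (where $n=\dim\q$) and form the skew-symmetric structure matrix $B(\xi)=\bigl(\langle\xi,[e_i,e_j]\rangle\bigr)_{i,j}$. Its generic rank is $n-l$, and $\xi\in\q^*_{\sf reg}$ iff some $(n-l)\times(n-l)$ principal Pfaffian of $B(\xi)$ is nonzero. Expanding
\[
  \textsl{d}f_1\wedge\cdots\wedge \textsl{d}f_l\;=\;\sum_{|I|=l}c_I(\xi)\,e_I\in\Lambda^l\q\otimes\bbk[\q^*],
\]
each $c_I$ is the $l\times l$ Jacobian minor along the columns indexed by $I$, homogeneous of degree $\sum_i(\dg f_i-1)=\bb(\q)-l=(n-l)/2$. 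This is exactly the degree of every $(n-l)\times(n-l)$ principal Pfaffian of $B$, and this coincidence of degrees is where the hypothesis $\sum\dg f_i=\bb(\q)$ is used. I claim that there exist a nonzero constant $\alpha\in\bbk$ and signs $\varepsilon_I=\pm1$ with
\[
  c_I(\xi)=\varepsilon_I\,\alpha\,\mathrm{Pf}\bigl(B(\xi)[I^c]\bigr)\qquad\text{for every }|I|=l.
\]
The identity is multilinear algebra extracted from the Casimir relations $B(\xi)(\textsl{d}f_i)_\xi=0$ (which hold because $f_i\in\bbk[\q^*]^Q$) together with the degree match: both sides represent, up to a common sign, the coefficient of $e_1\wedge\cdots\wedge e_n$ in $\textsl{d}f_1\wedge\cdots\wedge \textsl{d}f_l\wedge \omega_\xi^{\wedge(n-l)/2}$, where $\omega_\xi=\sum_{i<j}\langle\xi,[e_i,e_j]\rangle\,e^i\wedge e^j$. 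Algebraic independence of the $f_i$ makes some $c_I$ a nonzero polynomial, forcing $\alpha\ne 0$. Assertion (ii) is then immediate: $(\textsl{d}f_1)_\xi\wedge\cdots\wedge(\textsl{d}f_l)_\xi=0$ iff all $c_I(\xi)=0$ iff all complementary Pfaffians of $B(\xi)$ vanish iff $\rk B(\xi)<n-l$ iff $\xi\notin\q^*_{\sf reg}$. The codim-$2$ property has not yet been used.

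For (i), codim-$2$ ensures that $\Omega:=\q^*_{\sf reg}$ is big. Take $g\in\bbk[\q^*]^Q$. On $\Omega$ the gradient $(\textsl{d}g)_\xi$ lies in the $l$-dimensional $\q_\xi$, which by (ii) is spanned by the $(\textsl{d}f_i)_\xi$; hence $\textsl{d}g=\sum h_i\,\textsl{d}f_i$ for some $h_i\in\bbk(\q^*)$, and together with $\trdeg\bbk(\q^*)^Q=l$ and algebraic independence of $f_1,\ldots,f_l$, the standard Jacobian criterion gives $\bbk(\q^*)^Q=\bbk(f_1,\ldots,f_l)$. Write $g=P(f_1,\ldots,f_l)/Q(f_1,\ldots,f_l)$ in lowest terms. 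Any non-constant irreducible factor of $Q$ would pull back under $F=(f_1,\ldots,f_l)\colon\q^*\to\bbk^l$ to an effective divisor of $\q^*$; since $F$ is smooth of relative dimension $n-l$ on $\Omega$ and $\Omega$ is big, the pullback acts injectively on divisors, and the coprimality of $P$ and $Q$ forces that divisor not to be cancelled, contradicting regularity of $g$ on $\q^*$. Hence $Q\in\bbk^\times$ and $g\in\bbk[f_1,\ldots,f_l]$. The main obstacle is the Pfaffian identity: once it is in hand, (ii) is a one-line linear-algebra consequence, and (i) follows by the divisor-theoretic argument sketched above.
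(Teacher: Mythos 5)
The paper offers no proof of this theorem --- it is imported verbatim from \cite[Theorem~1.2]{coadj} --- so your argument must stand on its own, and its central step does not. The genuine gap is the claim that the Pfaffian identity $c_I=\varepsilon_I\,\alpha\,\mathrm{Pf}\bigl(B[I^c]\bigr)$ holds with $\alpha$ a nonzero \emph{constant}, and hence that (ii) follows ``from the degree hypothesis alone''. Linear algebra over the field $\bbk(\q^*)$ does give that the $(\textsl{d}f_i)$ form a basis of $\ker B$ (they lie in $\ker B$ by invariance, are independent by algebraic independence, and $\dim\ker B=l$ generically), and therefore that $c_I=h\cdot\varepsilon_I\,\mathrm{Pf}\bigl(B[I^c]\bigr)$ for some \emph{rational function} $h$; the degree match then shows only that $h$ is homogeneous of degree $0$, which is far from making it constant ($x/y$ is homogeneous of degree $0$). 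Writing $h=u/v$ in lowest terms, one finds $v\mid\mathrm{Pf}\bigl(B[I^c]\bigr)$ for every $I$, so a non-constant $v$ would have its zero divisor inside the common zero locus of all these sub-Pfaffians, i.e.\ inside $\q^*\setminus\q^*_{\sf reg}$; excluding this is precisely the {\sl codim}--$2$ property. So {\sl codim}--$2$ is not a spectator in (ii): it is what turns your degree count into the constancy of $\alpha$. Indeed, (ii) is simply false under the degree hypothesis alone. Take $\q=\mathfrak{Hei}_1\oplus\bbk w$ with $w$ central, so that $\dim\q=4$, $\ind\q=2$, $\bb(\q)=3$, $\bbk[\q^*]^Q=\bbk[z,w]$, and {\sl codim}--$2$ fails. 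The invariants $f_1=z$, $f_2=w^2$ are homogeneous, algebraically independent, with $\deg f_1+\deg f_2=3=\bb(\q)$, yet $(\textsl{d}f_1)_\xi$, $(\textsl{d}f_2)_\xi$ are independent exactly where $\langle\xi,w\rangle\ne 0$, whereas $\q^*_{\sf reg}=\{\langle\xi,z\rangle\ne0\}$. (Your heuristic that ``both sides represent the coefficient of $e_1\wedge\cdots\wedge e_n$'' in a single wedge also cannot be right as stated: one fixed coefficient cannot equal $c_I$ and $\mathrm{Pf}\bigl(B[I^c]\bigr)$ for all $I$ simultaneously, and the proposed wedge mixes $\Lambda^\bullet\q$ with $\Lambda^\bullet\q^*$.)

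Part (i) is essentially the right argument --- it amounts to the special case of \cite[Theorem~1.1]{trio} that the paper invokes elsewhere --- but one step is overstated: the Jacobian criterion yields only that each $g\in\bbk[\q^*]^Q$ is \emph{algebraic} over $\bbk(f_1,\dots,f_l)$, not that $\bbk(\q^*)^Q=\bbk(f_1,\dots,f_l)$. To promote algebraicity to membership one must show that $\bbk(f_1,\dots,f_l)$ is algebraically closed in $\bbk(\q^*)$, which again uses the independence of the differentials on a big open set and not merely transcendence-degree bookkeeping; the example above shows what goes wrong otherwise, since $w\in\bbk[\q^*]^Q$ is algebraic over $\bbk(z,w^2)$ but does not lie in it. Your divisor-theoretic passage from $\bbk(f_1,\dots,f_l)$ to $\bbk[f_1,\dots,f_l]$, using smoothness of $F$ over the big set $\Omega$, is correct. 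With these two ingredients supplied --- and with {\sl codim}--$2$ restored to its load-bearing role in (ii) --- the argument closes and agrees with the proof in the cited source.
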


\noindent 
Furthermore, if $\q$ has the {\sl codim}--$2$ property, then for any collection of algebraically independent 
homogeneous $f_1,\dots,f_l\in\bbk[\q]^Q$ with $l=\ind\q$, one has $\sum_{i=1}^l \deg f_i\ge \bb(\q)$.

\begin{opr}[cf. \cite{blms}]   \label{df:wonder}
An algebraic Lie algebra $\q$ is said to be $n$-{\it wonderful}, if
\begin{itemize}
\item[\sf (i)] \  $\q$ has the {\sl codim}--$n$ property.
\item[\sf (ii)] \  $\bbk[\q^*]^Q$ is a polynomial algebra of Krull dimension $l=\ind\q$;
\item[\sf (iii)] \ If $f_1,\ldots,f_l$ are basic invariants in $\bbk[\q^*]^Q$, then 
$\sum_{i=1}^l \deg f_i=\bb(\q)$.
\end{itemize} 
\end{opr}
\noindent
For instance, any semisimple Lie algebra is $3$-wonderful.

It follows from Theorem~\ref{thm:coadj1.2} that if $\q$ is 2-wonderful, then 
$\Omega_{\q^*}=\q^*_{\sf reg}$ is big. Therefore, Theorem~\ref{thm:main} applies to all 2-wonderful Lie 
algebras. (A more precise statement is given in Corollary~\ref{cor:main} below.)  For instance, it applies 
to all centralisers of nilpotent elements in types $\GR{A}{n}$ or $\GR{C}{n}$, 
see~\cite[Theorems\,4.2 \& 4.4]{trio} and~\cite[Section\,3]{ap}.

Suppose that $\bbk[\q^*]^Q$ is a polynomial ring, but nothing is known about the {\sl codim}--$2$ 
property.  Theorem~\ref{thm:main} suggests that one needs some tools to decide whether 
$\Omega_{\q^*}$ is big. In many cases, the following assertion is helpful. 

\begin{prop}[see {\cite[Prop.\,5.2]{js}}]   \label{prop:JS}
If\/ $\bbk[\q^*]^Q$ is a polynomial ring and $Q$ has no proper semi-invariants in $\bbk[\q^*]$, then
$\Omega_{\q^*}$ is big.
\end{prop}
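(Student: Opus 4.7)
I argue by contradiction. Suppose $\Omega_{\q^*}$ is not big, so the closed subset $Z=\q^*\setminus\Omega_{\q^*}$ has an irreducible component $D$ of codimension one. Since the $f_i$ are $Q$-invariant, the polynomial map $\omega=\textsl{d}f_1\wedge\dots\wedge\textsl{d}f_l:\q^*\to\Lambda^l\q$ is $Q$-equivariant, so its zero locus $Z$ is $Q$-stable; as $Q$ is connected, $D$ itself is $Q$-stable. An irreducible $Q$-stable divisor in $\q^*$ is defined by a $Q$-semi-invariant irreducible polynomial $p\in\bbk[\q^*]$, and the hypothesis on proper semi-invariants upgrades $p$ to a $Q$-invariant. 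Therefore $p\in\bbk[\q^*]^Q=\bbk[f_1,\dots,f_l]$, and after passing to an irreducible factor (using irreducibility of $p$ together with algebraic independence of the $f_i$) we may write $p=P(f_1,\dots,f_l)$ with $P\in\bbk[y_1,\dots,y_l]$ irreducible.

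Set $n=\dim\q$ and $F=(f_1,\dots,f_l):\q^*\to\bbk^l$. The equality $F(D)=F(\q^*)\cap V(P)$ exhibits $F|_D:D\to V(P)$ as a dominant morphism between irreducible varieties of dimensions $n-1$ and $l-1$. I then pick a generic point $\xi\in D$ at which $\xi$ is smooth on $D$, $F(\xi)$ is smooth on $V(P)$, and $F|_D$ is smooth at $\xi$ (possible by generic smoothness in characteristic zero). The chain rule for $p=P\circ F$ gives $\textsl{d}p=\sum_i(\partial P/\partial y_i)(F)\,\textsl{d}f_i$, so $\textsl{d}p$ vanishes on $\ker\textsl{d}F$; consequently $\ker\textsl{d}F(\xi)\subseteq\ker\textsl{d}p(\xi)=T_\xi D$. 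Smoothness of $F|_D$ at $\xi$ identifies $\ker\textsl{d}F(\xi)=\ker(\textsl{d}F|_{T_\xi D})$ with the tangent space to the smooth fibre of $F|_D$ through $\xi$, of dimension $(n-1)-(l-1)=n-l$. On the other hand, $\xi\in Z$ forces $\rk\textsl{d}F(\xi)\le l-1$, so $\dim\ker\textsl{d}F(\xi)\ge n-l+1$. The two bounds $n-l$ and $n-l+1$ are incompatible.

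The main delicate point, in my view, is the dominance of $F|_D:D\to V(P)$ and the generic smoothness that underpins the dimension count; once these are in hand the contradiction is purely linear-algebraic. Dominance comes from the fact that a non-empty Zariski-open subset of $\bbk^l$ contained in $F(\q^*)$ must meet the codimension-one subvariety $V(P)$ in a dense open of $V(P)$; irreducibility of $P$ in characteristic zero then ensures that the smooth loci of $D$ and of $V(P)$ are non-empty, which is all that is required to run the argument at a generic $\xi\in D$.
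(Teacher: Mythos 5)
The paper does not actually prove this proposition --- it is imported from Joseph--Shafrir --- so there is no in-house argument to compare with; your overall strategy (an irreducible divisorial component $D$ of $\q^*\setminus\Omega_{\q^*}$ is $Q$-stable, hence cut out by an irreducible semi-invariant, hence by an invariant $p=P(f_1,\dots,f_l)$, followed by a rank count along $D$) is the standard route, and the linear-algebra endgame is correct. The one genuine gap sits precisely at the point you yourself flag as delicate: the dominance of $F|_D\colon D\to V(P)$. The justification you give --- that a non-empty Zariski-open subset of $\bbk^l$ contained in $F(\q^*)$ must meet $V(P)$ in a dense open subset of $V(P)$ --- is false: a dense open subset of $\bbk^l$ can be entirely disjoint from a prescribed hypersurface (take $\bbk^l\setminus V(P)$ itself), so dominance of $F$ onto $\bbk^l$ says nothing a priori about $F(D)=F(\q^*)\cap V(P)$. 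For a general dominant polynomial map this step genuinely fails: for $F(x,y)=(x,xy)$ and $P=y_1$, the hypersurface $F^{-1}(V(P))=\{x=0\}$ maps to the single point $(0,0)\in V(P)$.

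The repair must use the hypotheses again, and it does work. Since $p=P(f_1,\dots,f_l)$ is irreducible, $I(D)=(P(f))$. If $R\in\bbk[y_1,\dots,y_l]$ vanishes on $F(D)$, then $R(f)\in I(D)$, so $R(f)=P(f)\,h$ for some $h\in\bbk[\q^*]$; because $\bbk[\q^*]$ is a domain and both $R(f)$ and $P(f)$ are $Q$-invariant, $h$ is $Q$-invariant, hence $h=H(f_1,\dots,f_l)$ by the polynomiality hypothesis, and algebraic independence of the $f_i$ then forces $R=PH$, i.e., $P\mid R$. Thus $I\bigl(\overline{F(D)}\bigr)=(P)$ and $F|_D$ is dominant onto $V(P)$; with this substituted for your dominance argument, the proof closes. (A minor point worth making explicit: the identification $\ker \textsl{d}p(\xi)=T_\xi D$ requires $\textsl{d}p(\xi)\neq 0$, which does hold at smooth points of $D$ because $I(D)=(p)$ with $p$ irreducible, so your choice of generic $\xi$ covers it.)
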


\begin{rmk}    \label{rem:knop}
Using some ideas of Knop (see~\cite[Satz\,2]{kn86}), we can prove a more general assertion, which we do not need here. Namely, 
\\   \indent
{\it Let an algebraic group $Q$ act on an irreducible affine factorial variety $X$. Suppose that $X\md Q$ 
exists (i.e., $\bbk[X]^Q$ is finitely generated) and $\bbk[X]$ contains no proper $Q$-semi-invariants. Let 
$X_{\sf sm}$ denote the smooth locus of $X$ and $\pi_Q:X\to Y:=X\md Q$ the quotient morphism. Set 
$\Omega_X=\{x\in X_{\sf sm}\mid \pi_Q(x)\in Y_{\sf sm} \ \& \ (\textsl{d}\pi_Q)_x \ \text{ is onto}\}$. Then
$\Omega_X$ is big.
}
\end{rmk}

\section{Takiff algebras and their symmetric invariants}
\label{sect:Takiff-inv}

\noindent
By definition, the $m$-th Takiff algebra of $\q$ is $\qm:=\q\otimes \bbk[T]/(T^{m+1})$. In particular,
$\q\langle 1\rangle=\q\ltimes\q^{\sf ab}$ is the semi-direct product, where the second factor is an abelian 
ideal. For $j\le m$, the image of $\q\otimes T^j$ in $\qm$ is denoted by $\q_{[j]}$. 
A typical element of $\qm$ can be written as 
$\bx=(x_0, x_1,\dots,x_{m})$, where $x_j\in\q_{[j]}$. 
Likewise, we have $\qm^*\simeq \bigoplus_{j=0}^{m}(\q_{[j]}^*)$ as vector space, and 
$\bxi=(\xi_0,\xi_1,\dots,\xi_{m})$ is an element of $\qm^*$, where $\xi_j\in \q_{[j]}^*$. Then the pairing of 
$\qm$ and $\qm^*$ is given by
   ${<} \bx,\bxi{>}_{\qm}= \sum_{i=0}^{m}{<}x_i,\xi_i{>}_\q$.
It is sometimes convenient to regard the elements of $\qm$ and $\qm^*$ as  "polynomials'' in $\eps$, 
where $\eps^{m+1}=0$. Namely,
\[
  \bx_\eps
  =\sum_{i=0}^{m}x_i\eps^{i} \quad\text{and} \quad   \bxi_\eps
  =\sum_{j=0}^{m}\xi_j\eps^{m-j} .
\]
Using this notation, the Lie bracket in $\qm$ is
\[
   [\bx_\eps,\boldsymbol{y}_\eps]=\sum_{0\le i+j\le m}[x_i,y_j]\eps^{i+j}
\]
and the coadjoint representation $\mathrm{ad}^*_{\qm}$ of $\qm$ is given by
\[
  (\mathrm{ad}^*_{\qm} \bx_\eps)\bxi_\eps=\sum_{0\le j-i\le m} \bigl(\mathrm{ad}^*_\q (x_i)\xi_j\bigr) \eps^{m-j+i}.
\]
Then $\qm^u=\bigoplus_{j=1}^{m}\q_{[j]}$ is an {\sf ad}-nilpotent ideal of $\qm$ and the corresponding 
connected algebraic group is 
\[
     \Qm\simeq Q\ltimes \exp(\qm^u)=Q\ltimes \Qm^u .
\]
(If $Q$ is reductive, then $\Qm^u$ is the unipotent radical of $\Qm$.) For a non-Abelian $Q$, the 
unipotent group $\Qm^u$ is commutative if and only if $m=1$.

By~\cite[2.8]{rt}, one has $\ind\qm=(m+1){\cdot} \ind\q$. Hence also $\bb(\qm)=(m+1){\cdot} \bb(\q)$. 
Moreover, 
\beq      \label{eq:qm-reg}
   \bxi\in \qm^*_{\sf reg} \ \text{ if and only if } \  \xi_{m}\in \q^*_{\sf reg} .
\eeq    
Therefore, the presence of {\sl codim}--$n$ property for $\q$ implies that for $\qm$.

A general method for constructing symmetric invariants of $\qm$ is presented in \cite{rt}. Suppose that 
$f\in \bbk[\q^*]$ is homogeneous. Recall that  $\textsl{d}f\in\Mor(\q^*,\q)$ is the differential of $f$. 
Consider $\bxi_\eps$ as an element of $\q^*\otimes\bbk[\eps]$
with $\eps^{m+1}=0$, and expand $f(\bxi_\eps)$ as a polynomial in $\eps$:
\[
   f(\xi_{m}+\eps\xi_{m-1}+\dots+\eps^{m-1}\xi_{1}+\eps^{m}\xi_0)=\sum_{j=0}^{m}F^j(\bxi)\eps^j .
\]
It is readily seen that $F^0(\bxi)=f(\xi_{m})$ and $F^1(\bxi)={<}(\textsl{d}f)_{\xi_{m}},\xi_{m-1}{>}_\q$. 
More generally, the following assertion is true.

\begin{prop}[see~\protect{\cite[Section\,III]{rt}}]      \label{prop:rt-1}
For any $j\in\{0,1,\dots,m\}$, we have 
\begin{itemize} 
\item[\sf (i)] \  $F^j(\bxi)={<}(\textsl{d}f)_{\xi_{m}},\xi_{m-j}{>}_\q + H_j(\xi_{m},\dots,\xi_{m-j+1})$ for some  $H_j\in \bbk[\qm^*]$;
\item[\sf (ii)] \ If $f\in \bbk[\q^*]^Q$, then every $F^j$ is a symmetric invariant of\/ $\qm$, i.e., $F^j\in \bbk[\qm^*]^{\Qm}$.
\end{itemize}
\end{prop}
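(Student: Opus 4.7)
For part (i), I would proceed by Taylor expansion. Writing $\bxi_\eps = \xi_m + \Delta_\eps$ with $\Delta_\eps := \sum_{i=1}^{m}\xi_{m-i}\eps^{i}$, expand
\[
  f(\bxi_\eps) = \sum_{k\ge 0}\frac{1}{k!}\,D^k\!f(\xi_m)\bigl(\Delta_\eps,\dots,\Delta_\eps\bigr)
\]
as a polynomial in $\eps$ (the sum is finite since $f$ is a polynomial, with $D^k\!f(\xi_m)$ the symmetric $k$-linear form on $\q^*$ obtained as the $k$-th differential of $f$ at $\xi_m$). The $k=1$ term equals $\langle(\textsl{d}f)_{\xi_m},\Delta_\eps\rangle_\q$, whose $\eps^j$-coefficient is precisely $\langle(\textsl{d}f)_{\xi_m},\xi_{m-j}\rangle_\q$. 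For $k\ge 2$, each factor of $\Delta_\eps$ carries a power $\eps^{i_s}$ with $i_s\ge 1$, so only monomials $D^k\!f(\xi_m)(\xi_{m-i_1},\dots,\xi_{m-i_k})$ with $\sum_s i_s=j$ and $i_s\ge 1$ can contribute to the $\eps^j$-coefficient. Since each $i_s\le j-1$, the only arguments that appear are $\xi_m$ (through the derivatives of $f$) and $\xi_{m-1},\dots,\xi_{m-j+1}$. Collecting the $k\ge 2$ contributions into a single polynomial yields the desired $H_j$.

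For part (ii), I would argue by extension of scalars. Set $R:=\bbk[\eps]/(\eps^{m+1})$. Then $\q_R := \q\otimes_\bbk R$ is an $R$-Lie algebra that, as a $\bbk$-Lie algebra, is canonically isomorphic to $\qm$; similarly $\q^*\otimes R$ is identified with $\qm^*$ via the pairing given in Section~\ref{sect:Takiff-inv}, and under this identification the coadjoint action of $\qm$ on $\qm^*$ is the $R$-linear extension of $\mathrm{ad}^*_\q$---this is precisely what the formula for $(\mathrm{ad}^*_{\qm}\bx_\eps)\bxi_\eps$ recalled above records. The $\bbk$-group $Q(R)$ of $R$-points of $Q$ fits into a split short exact sequence $1\to\exp(\qm^u)\to Q(R)\to Q(\bbk)\to 1$, so it coincides with $\Qm$ as an abstract $\bbk$-group and acts on $\q^*\otimes R$ compatibly with the coadjoint action of $\Qm$ on $\qm^*$. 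Any $f\in\bbk[\q^*]^Q$ extends, by $R$-linear substitution of arguments, to an $R$-valued polynomial $\tilde f$ on $\q^*\otimes R$. The $Q(\bbk)$-invariance of $f$ is a polynomial identity in the matrix entries of $g$ and the coordinates of $\xi$ over $\bbk$, hence survives base change to $R$, giving $\tilde f(g\cdot\bxi)=\tilde f(\bxi)$ for all $g\in Q(R)$ and $\bxi\in\q^*\otimes R$. Applying this to $\bxi_\eps$ and $g\in\Qm$ and comparing coefficients of $\eps^j$ in $R$ yields $F^j(g\cdot\bxi)=F^j(\bxi)$, establishing (ii).

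The load-bearing step is the identification of $\Qm$ with $Q(R)$ as a $\bbk$-group and the compatibility of their coadjoint actions on $\qm^*\simeq\q^*\otimes R$; once this bookkeeping is in place, the invariance of every $F^j$ is automatic from the principle that a polynomial identity over $\bbk$ remains valid after base change. A more hands-on alternative is to verify the infinitesimal identity $\{x_i\eps^i,F^j\}_{\qm}=0$ directly on generators of $\qm$, using the formula for $\mathrm{ad}^*_{\qm}$ together with the $\q$-invariance of $f$; this produces the same conclusion but is computationally heavier and somewhat obscures the conceptual reason.
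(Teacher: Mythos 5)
Your proposal is correct, and there is nothing in the paper to compare it against: Proposition~\ref{prop:rt-1} is stated without proof, with a pointer to \cite[Section\,III]{rt}, and your two steps --- the Taylor expansion of $f(\xi_m+\Delta_\eps)$ isolating the $k=1$ term for {\sf (i)}, and the base change of the identity $f(g\cdot\xi)=f(\xi)$ to $R=\bbk[\eps]/(\eps^{m+1})$ together with the identification $\Qm\simeq Q(R)$ acting on $\q^*\otimes R\simeq\qm^*$ for {\sf (ii)} --- are exactly the standard Ra\"is--Tauvel argument. (The only caveat is the degenerate case $j=0$, where your expansion gives $F^0=f(\xi_m)$ with no linear term in $\Delta_\eps$, so the displayed formula in {\sf (i)} should be read via Euler's identity or simply restricted to $j\ge 1$; this is a defect of the statement, not of your proof.)
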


Let $f_1,\dots,f_l$ be a set of basic invariants in $\bbk[\q^*]^Q$, where $l=\ind\q$. Using the above 
construction of~\cite{rt}, we associate to each $f_i$ the set of $\Qm$-invariants $F_i^0,\dots, F_i^{m}$.
Now, we are ready to state precisely our main result.

\begin{thm}   \label{thm:main1}
Let $Q$ be a connected algebraic group such that $\bbk[\q^*]^Q=\bbk[f_1,\dots,f_l]$ is a graded 
polynomial ring, where $l=\ind\q$. Set 
$\Omega_{\q^*}=\{\xi\in\q^*\mid (\textsl{d}f_1)_\xi\wedge\dots \wedge(\textsl{d}f_l)_\xi\ne 0\}$, and assume 
that $\Omega_{\q^*}$ is big. For any $m\ge 1$, we then have
\begin{itemize}
\item[\sf (i)] \ $\bbk[\qm^*]^{\Qm^u}$ is a graded polynomial ring of Krull dimension $\dim\q+ml$, which 
is freely generated by the coordinate functions on $\q_{[m]}^*$ and the $\{F_i^j\}$'s with 
$i=1,\dots,l$ and $j=1,\dots,m$.
\item[\sf (ii)] \  the Takiff algebra $\q\langle m\rangle$ has the same properties as $\q$, i.e., 
\begin{itemize}
\item $\bbk[\qm^*]^{\Qm}$ is a graded polynomial ring of Krull dimension $\ind\qm=(m+1)l$. (It is freely 
generated by the $\{F_i^j\}$'s with $i=1,\dots,l$ and $j=0,\dots,m$.)  
\item $\Omega_{\qm^*}=\bigoplus_{j=0}^{m-1}\q_{[j]}^*\times  \Omega_{\q^*}$ is big, where 
$\Omega_{\q^*}\subset \q_{[m]}^*\simeq \q^*$.
\end{itemize}
\end{itemize}
\end{thm}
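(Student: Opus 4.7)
The strategy is to prove part (\textsf{i}) first and then extract part (\textsf{ii}) by further $\q$-averaging of the subring identified in (\textsf{i}). Throughout I exploit the $\varepsilon$-grading $\qm^*=\bigoplus_{j=0}^{m}\q_{[j]}^*$ and the Ra\"{\i}s--Tauvel formulas of Proposition~\ref{prop:rt-1}.

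The opening move for (\textsf{i}) is to check that the proposed $\dim\q+ml$ generators are indeed $\qm^u$-invariant: the $F_i^j$ are $\qm$-invariant by Proposition~\ref{prop:rt-1}(\textsf{ii}); and the coordinates on $\q_{[m]}^*$ are $\qm^u$-invariant because the formula for $\ads_{\qm}$ gives $(\ads_{\qm}(x_i)\bxi)_m=\ads_\q(x_i)\xi_{m+i}=0$ whenever $i\ge 1$. For algebraic independence and maximal Jacobian rank, I would pick $\bxi$ with $\xi_m\in\Omega_{\q^*}$ and use Proposition~\ref{prop:rt-1}(\textsf{i}):
\[
F_i^j(\bxi)=\langle (df_i)_{\xi_m},\xi_{m-j}\rangle+H_{ij}(\xi_m,\dots,\xi_{m-j+1}).
\]
Thus the $\q_{[k]}$-component of $(dF_i^j)_{\bxi}$ vanishes for $k<m-j$, equals $(df_i)_{\xi_m}$ for $k=m-j$, and is unconstrained for $k>m-j$, while the differentials of the $\q_{[m]}^*$-coordinates live entirely in $\q_{[m]}$. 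Examining components $k=0,1,\dots,m$ in order and using the linear independence of the $(df_i)_{\xi_m}$, one inductively kills all coefficients of any purported linear relation. Hence the generators span a polynomial subring $R\subset\bbk[\qm^*]^{\qm^u}$ of Krull dimension $\dim\q+ml$, with Jacobian of maximal rank on the big set $\Omega':=\bigoplus_{j<m}\q_{[j]}^*\times\Omega_{\q^*}$.

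The main obstacle is the equality $R=\bbk[\qm^*]^{\qm^u}$. I would match transcendence degrees first. The existence of $\dim\q+ml$ independent invariants yields one inequality; the reverse is witnessed at the point $\bxi=(0,\dots,0,\xi_m)$ with $\xi_m\in\q^*_{\sf reg}$, where the stabilizer equations $\sum_{i=1}^{r}\ads_\q(x_i)\xi_{m-r+i}=0$ collapse to $\ads_\q(x_r)\xi_m=0$ and produce a stabilizer of dimension exactly $ml$. So $R$ and $\bbk[\qm^*]^{\qm^u}$ share a fraction field. Both are integrally closed: $R$ as a polynomial ring, and $\bbk[\qm^*]^{\qm^u}$ because $\Qm^u$ is unipotent and $\bbk[\qm^*]$ is normal. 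A standard codim-$2$ argument then finishes: any $f\in\bbk[\qm^*]^{\qm^u}$ restricts on $\Omega'$ to a function factoring through the quotient morphism $\Phi\colon\qm^*\to\spe R$ cut out by the generators, and since $\qm^*\setminus\Omega'$ has codim $\ge 2$, so does its image under the equidimensional $\Phi$ in $\spe R$; the factorization extends to a regular function on $\spe R\cong\bbk^{\dim\q+ml}$ by normality, placing $f$ in $R$.

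For part (\textsf{ii}), write $\bbk[\qm^*]^{\qm}=(\bbk[\qm^*]^{\qm^u})^{\q}=R^{\q}$. The $\q=\q_{[0]}$-action on $R=\bbk[\bx][F_i^j:1\le j\le m]$ fixes each $F_i^j$ (by their full $\qm$-invariance) and acts on the $\q_{[m]}^*$-coordinates $\bx$ by the usual coadjoint representation, since $(\ads_{\qm}(x_0)\bxi)_m=\ads_\q(x_0)\xi_m$. Therefore $R^{\q}=\bbk[\bx]^{\q}[F_i^j:j\ge 1]=\bbk[f_1,\dots,f_l][F_i^j:j\ge 1]=\bbk[F_i^j:0\le j\le m]$, because $F_i^0=f_i$ viewed as a function of $\xi_m$. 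This is a graded polynomial ring of Krull dimension $(m+1)l=\ind\qm$. Redoing the triangular Jacobian analysis with $F_i^0$ now included (its differential sits entirely in $\q_{[m]}$ and equals $(df_i)_{\xi_m}$) shows that the full Jacobian has maximal rank at $\bxi$ exactly when $\xi_m\in\Omega_{\q^*}$: if the $(df_i)_{\xi_m}$ are linearly dependent, the same dependence immediately passes to the $(dF_i^0)_{\bxi}$. Hence $\Omega_{\qm^*}=\bigoplus_{j<m}\q_{[j]}^*\times\Omega_{\q^*}$, which is big in $\qm^*$ because $\Omega_{\q^*}$ is big in $\q^*$.
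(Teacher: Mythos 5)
Your construction of the candidate generating set, the triangular analysis of the differentials via Proposition~\ref{prop:rt-1}(i), the transcendence-degree bound coming from the $\Qm^u$-orbit of $(0,\dots,0,\xi_m)$ with $\xi_m\in\q^*_{\sf reg}$, and the whole of part~(ii) coincide with the paper's argument and are correct. The gap lies in the one step that carries the real weight: passing from ``$R\subseteq\bbk[\qm^*]^{\Qm^u}$ is an algebraic extension'' to ``$R=\bbk[\qm^*]^{\Qm^u}$''. Equality of transcendence degrees does not imply that the two rings share a fraction field ($\bbk[x^2]\subset\bbk[x]$ is a counterexample), so the normality of both rings buys you nothing by itself. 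And the assertion that every $f\in\bbk[\qm^*]^{\Qm^u}$ factors through $\Phi\colon\qm^*\to\spe R$ on $\Omega'$ is exactly what has to be proved: a $\Qm^u$-invariant is constant on orbits, not on fibres of $\Phi$, so you must show that the fibres of $\Phi$ over a big open subset of $\spe R$ are single $\Qm^u$-orbits. That verification (conditions (1)--(2) of Remark~\ref{rem:Igusa}, followed by Igusa's lemma) is one legitimate route; the paper's actual route is to quote \cite[Theorem\,1.1]{trio}, which asserts that a graded polynomial subalgebra whose generators have differentials linearly independent off a subset of codimension $\ge 2$ is algebraically closed in $\bbk[\qm^*]$, whence equality follows at once from the transcendence-degree count. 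Your write-up supplies neither ingredient.

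A secondary error: the image of the codimension-$\ge 2$ set $\qm^*\setminus\Omega'$ under $\Phi$ need not have codimension $\ge 2$ even if $\Phi$ were equidimensional (project $\mathbb A^2\to\mathbb A^1$ and take a single point of a fibre), and the equidimensionality of $\Phi$ is itself not established --- indeed Section~\ref{sect:Eq} shows that equidimensionality of such quotient maps genuinely fails for iterated Takiff algebras. What the normality/extension step really requires is that $\Phi(\Omega')$ contain a big open subset of $\spe R$; this does hold (it contains $\Omega_{\q^*}\times\mathbb A^{ml}$ by the triangular form of the $F_i^j$), but it is a statement about the image of $\Omega'$, not about the image of its complement, and it needs to be said and proved.
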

\begin{proof} 
{\sf (i)} \ Recall that $\qm\simeq \q\ltimes \qm^u$, where $\q=\q_{[0]}$ and 
$\qm^u=\bigoplus_{j=1}^{m}\q_{[j]}$, and $\Qm=Q\ltimes\Qm^u$. Here $\Qm^u$ is a unipotent 
normal subgroup of $\Qm$. 

Note that the subspace $\q_{[m]}\subset \qm$ regarded as a subset of $\bbk[\qm^*]$ belongs to the 
subalgebra of $\Qm^u$-invariants, and $F_i^0=f_i \in \gS[\q_{[m]}]$.
Let $\ca$ denote the subalgebra of $\bbk[\qm^*]^{\Qm^u}$ generated by $\q_{[m]}$ and
$\{F_i^j\}$ with $j=1,\dots,m$ and $i=1,\dots,l$. (Note that we do {\bf not} include $F_1^0,\dots,F_l^0$ in
the generating set for $\ca$!)

For $\bx=(x_0,\dots,x_{m})$ with $x_i\in\q_{[i]}$, we say that $x_j\ne 0$ is the {\it lowest component\/} of 
$\bx$, if $x_0=\dots =x_{j-1}=0$.  Now, $(\textsl{d}F_i^j)_\bxi\in\qm$ and using 
Proposition~\ref{prop:rt-1}(i), one readily verifies that its lowest component is 
$\bigl((\textsl{d}F_i^j)_\bxi\bigr)_{m-j}=(\textsl{d}f_i)_{\xi_{m}}\in \q_{[m-j]}$, where $j=0,1,\dots,m-1$.
Clearly, these lowest components are linearly independent if and only if $\xi_{m}\in\Omega_{\q^*}$. If 
$v_1,\dots,v_{\dim\q}$ is a basis for $\q_{[m]}$, then $(\textsl{d}v_i)_{\bxi}=v_i\in \q_{[m]}$.
Since all these differentials have a block-triangular form w.r.t. the decomposition 
$\qm=\bigoplus_{i=1}^{m}\q_{[i]}$ (cf. Table~\ref{table:differ}), it follows that the differentials {\it per se\/} are linearly independent at $\bxi$ if and only if 
$\xi_{m}\in\Omega_{\q^*}$.
Therefore, the polynomials 
\[
  v_1,\dots,v_{\dim\q}, \text{ and } \{F_i^j\} \ \text{ with } j=1,\dots,m,\ i=1,\dots,l
\]
are algebraically independent and generate $\ca$.
As the differentials of this family are linearly independent on the big open subset 
$\bigoplus_{j=0}^{m-1}\q_{[j]}^*\times  \Omega_{\q^*}$ of $\qm^*$, Theorem~1.1
in \cite{trio} guarantee us that $\ca$ is an algebraically closed subalgebra in $\bbk[\qm^*]$,
of Krull dimension $\dim\q+ml$.
\\ \indent
On the other hand, if $\bxi=(0,\dots,0,\xi_{m})$ and $\xi_{m}\in\q^*_{\sf reg}$, then $\dim \Qm^u{\cdot}\bxi=m(\dim\q-l)$. Hence
$\trdeg \bbk[\qm^*]^{\Qm^u}\le \dim\qm-\dim \Qm^u{\cdot}\bxi=
 \dim\q+ml$. Therefore $\ca\subset \bbk[\qm^*]^{\Qm^u}$ is an algebraic extension, which implies that $\ca = \bbk[\qm^*]^{\Qm^u}$.
In other words, $\bbk[\qm^*]^{\Qm^u}=\bbk[\q_{[m]}^*][F_i^j, i=1,\dots,l; j=1,\dots,m]$.

{\sf (ii)} \ Since $\Qm\simeq Q\ltimes \Qm^u$ and the $F_i^j$'s are already $\Qm$-invariant
(Prop.~\ref{prop:rt-1}(ii)), it follows from part 
{\sf (i)} that 
\begin{multline*}
\bbk[\qm^*]^\Qm=(\bbk[\qm^*]^{\Qm^u})^Q=\bbk[\q_{[m]}^*]^Q[\{F_i^j\}, 1\le i\le l; 1\le j\le m]\\
=\bbk[\{F_i^j\}, 1\le i\le l; 0\le j\le m].
\end{multline*}
Furthermore, the differentials of the total set of generators $\{F_i^j\}$, with the value $j=0$ included, are 
also linearly independent if and only if $\xi_{m}\in \Omega_{\q^*}\subset \q_{[m]}^*$, 
see~\cite[Lemma\,3.3]{rt} and Table~\ref{table:differ}. 
\begin{table}[htb]
\caption{Components of the differentials of basic invariants}   \label{table:differ}
\begin{center}
\begin{tabular}{c|c c c c c c|}
 &  $\q_{[m]}$ & $\q_{[m-1]}$ & $\q_{[m-2]}$ & \multicolumn{2}{c}{\dots\dots\dots} &  $\q_{[0]}$ \\ \hline
$(\textsl{d}F_1^0)_\bxi$  & $(\textsl{d}f_1)_{\xi_m}$  & 0 & \dots &\dots & \dots  &0 \\
\vdots & \vdots &\vdots & \dots &\dots &\dots  & 0 \\
$(\textsl{d}F_l^0)_\bxi$  & $(\textsl{d}f_l)_{\xi_m}$  & 0 & \dots &\dots & \dots  &0 \\
$(\textsl{d}F_1^1)_\bxi$  & $\ast$ & $(\textsl{d}f_1)_{\xi_m}$  & 0 & \dots &\dots  &0 \\
\vdots & \vdots &\vdots & \vdots &\dots &\dots &  0 \\
$(\textsl{d}F_l^1)_\bxi$ & $\ast$ & $(\textsl{d}f_l)_{\xi_m}$  & 0 &  \dots &\dots  &0 \\
$(\textsl{d}F_1^2)_\bxi$  & $\ast$ & $\ast$ & $(\textsl{d}f_1)_{\xi_m}$  & \dots &\dots  &0 \\
\vdots & \vdots &\vdots & \vdots &\dots &\dots &  0 \\
$(\textsl{d}F_l^2)_\bxi$ & $\ast$ & $\ast$ & $(\textsl{d}f_l)_{\xi_m}$  & \dots &\dots  &0 \\
\dots & \dots &\dots & \dots &\dots &\dots & \dots  \\
\end{tabular}
\end{center}
\end{table}
Therefore,
$\Omega_{\qm^*} = \bigoplus_{j=0}^{m-1}\q_{[j]}^*\times  \Omega_{\q^*}$ \  is big.
\end{proof}

\noindent
For future use, we record a by-product of the proof:
\begin{cl}
 $\bxi\in\Omega_{\qm^*} \Longleftrightarrow  \xi_m\in \Omega_{\q^*}$.
\end{cl}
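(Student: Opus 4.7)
The plan is to extract this equivalence directly from the argument given for Theorem~\ref{thm:main1}(ii); no new ingredients are needed, only a careful reading of Table~\ref{table:differ}. Recall that $\Omega_{\qm^*}$ is by definition the set of $\bxi\in\qm^*$ at which the $(m+1)l$ differentials $(\textsl{d}F_i^j)_\bxi$, for $1\le i\le l$ and $0\le j\le m$, are linearly independent. So I need to show that the maximality of the rank of this Jacobian depends only on $\xi_m$, and coincides exactly with the condition $\xi_m\in\Omega_{\q^*}$.

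For the implication ($\Leftarrow$), I would invoke the description $\Omega_{\qm^*}=\bigoplus_{j=0}^{m-1}\q_{[j]}^*\times\Omega_{\q^*}$ established in the proof of Theorem~\ref{thm:main1}(ii): the block-triangular form of Table~\ref{table:differ} has $l\times l$ diagonal blocks each equal to the Jacobian of $(f_1,\dots,f_l)$ at $\xi_m$, so its full rank is automatic once $\xi_m\in\Omega_{\q^*}$, irrespective of the other components $\xi_0,\dots,\xi_{m-1}$.

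For the implication ($\Rightarrow$), I would argue contrapositively. Suppose $\xi_m\notin\Omega_{\q^*}$, i.e.\ $(\textsl{d}f_1)_{\xi_m},\dots,(\textsl{d}f_l)_{\xi_m}$ are linearly dependent in $\q$. Proposition~\ref{prop:rt-1}(i) gives $F_i^0=f_i\in\gS[\q_{[m]}]$, so $(\textsl{d}F_i^0)_\bxi=(\textsl{d}f_i)_{\xi_m}\in\q_{[m]}$ lies entirely in the top block; any nontrivial linear relation among the $(\textsl{d}f_i)_{\xi_m}$ therefore yields a nontrivial relation already among $(\textsl{d}F_1^0)_\bxi,\dots,(\textsl{d}F_l^0)_\bxi$, forcing $\bxi\notin\Omega_{\qm^*}$.

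No genuine obstacle is anticipated: the corollary is really a bookkeeping observation, and the only point requiring mild care is the indexing convention $\bxi_\eps=\sum_j \xi_j\eps^{m-j}$, which places $\xi_m$ as the coefficient of $\eps^0$ and thereby as the argument of the ``leading'' evaluations $(\textsl{d}f_i)_{\xi_m}$ appearing on the diagonal of Table~\ref{table:differ}.
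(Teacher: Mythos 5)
Your argument is correct and is essentially the paper's own justification: the corollary is recorded as a by-product of the proof of Theorem~\ref{thm:main1}(ii), which rests on exactly the same reading of Table~\ref{table:differ} — the block-triangular shape with lowest components $(\textsl{d}f_i)_{\xi_m}$ on the diagonal gives ($\Leftarrow$), and the fact that $(\textsl{d}F_i^0)_\bxi=(\textsl{d}f_i)_{\xi_m}$ is concentrated in $\q_{[m]}$ gives ($\Rightarrow$). The only slip is calling the diagonal blocks $l\times l$ when they are $l\times\dim\q$ (rows $(\textsl{d}f_i)_{\xi_m}\in\q$), but this does not affect the rank argument.
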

\begin{rmk}
It appears that Theorem~\ref{thm:main1} is fully analogous to ~\cite[Theorem\,11.1]{p05}, where the polynomiality of invariants for the {\bf adjoint} representation of $\qm$ is studied.
\end{rmk}
\begin{cl}     \label{cor:main}
If\/ $\q$ is an $n$-wonderful algebra for $n\ge 2$, then so is $\qm$ for any $m\in\BN$. 
\end{cl}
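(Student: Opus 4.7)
The plan is to verify each of the three axioms of $n$-wonderfulness for $\qm$ separately, using Theorem~\ref{thm:main1} for the hard part and a short calculation for the rest. Since $\q$ is $n$-wonderful with $n\ge 2$, it is in particular $2$-wonderful, so by Theorem~\ref{thm:coadj1.2} the set $\Omega_{\q^*}$ coincides with $\q^*_{\sf reg}$ and is therefore big. This places $\q$ inside the hypotheses of Theorem~\ref{thm:main1}, which will do most of the work.

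\smallskip
\emph{Step 1 (codim-$n$ property).} This is essentially already recorded in Section~\ref{sect:Takiff-inv}: by the regularity criterion \eqref{eq:qm-reg}, the non-regular locus $\qm^*\setminus\qm^*_{\sf reg}$ equals the preimage of $\q^*\setminus\q^*_{\sf reg}$ under the projection $\qm^*\twoheadrightarrow\q_{[m]}^*\simeq\q^*$. Hence its codimension in $\qm^*$ equals the codimension of $\q^*\setminus\q^*_{\sf reg}$ in $\q^*$, which is at least $n$.

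\smallskip
\emph{Step 2 (polynomiality).} Apply Theorem~\ref{thm:main1}(ii): since $\Omega_{\q^*}$ is big, $\bbk[\qm^*]^{\Qm}$ is a graded polynomial algebra of Krull dimension $(m+1)l=\ind\qm$, freely generated by $\{F_i^j\mid 1\le i\le l,\ 0\le j\le m\}$.

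\smallskip
\emph{Step 3 (degree sum).} I expect this to be the only genuine computation. Since $f_i$ is homogeneous of degree $d_i=\deg f_i$ and $\bxi_\eps=\sum_{j=0}^m\xi_{m-j}\eps^j$ is linear in the coordinates on $\qm^*$, each coefficient $F_i^j$ in the expansion $f_i(\bxi_\eps)=\sum_jF_i^j\eps^j$ is homogeneous of the same degree $d_i$ as a function on $\qm^*$. Using the formula $\bb(\qm)=(m+1)\bb(\q)$ from Section~\ref{sect:Takiff-inv}, one gets
\[
\sum_{i=1}^{l}\sum_{j=0}^{m}\deg F_i^j=(m+1)\sum_{i=1}^{l}\deg f_i=(m+1)\bb(\q)=\bb(\qm),
\]
as required.

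\smallskip
Combining the three steps yields that $\qm$ is $n$-wonderful. There is no real obstacle: Theorem~\ref{thm:main1} and the regularity criterion \eqref{eq:qm-reg} handle (ii) and (i), and the only thing that must be checked by hand is the mild observation in Step 3 that the Ra\"is--Tauvel construction preserves the degrees of the basic invariants, which is immediate from homogeneity.
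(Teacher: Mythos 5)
Your proposal is correct and follows essentially the same route as the paper's proof: deduce from Theorem~\ref{thm:coadj1.2} that $\Omega_{\q^*}=\q^*_{\sf reg}$ is big so that Theorem~\ref{thm:main1} applies, get the codim-$n$ property from the regularity criterion \eqref{eq:qm-reg}, and check the degree sum via $\deg F_i^j=\deg f_i$ and $\bb(\qm)=(m+1)\bb(\q)$. No discrepancies to report.
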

\begin{proof} 
Let us check that the properties of Definition~\ref{df:wonder} carry over from $\q$ to $\qm$.

\textbullet \quad As noted above, the presence of {\sl codim}--$n$ property for $\q$ implies that for $\qm$.
We also have $\dim \qm=(m+1){\cdot}\dim\q$ and $\ind\qm=(m+1){\cdot}\ind\q$.

\textbullet \quad If $\q$ is 2-wonderful, then $(\textsl{d}f_1)_\xi,\dots,(\textsl{d}f_l)_\xi$ are linearly 
independent if and only if $\xi\in\q^*_{\sf reg}$ (Theorem~\ref{thm:coadj1.2}). Hence 
$\Omega_{\q^*}=\q^*_{\sf reg}$ and its complement does not contain divisors. Therefore, 
$\bbk[\qm^*]^\Qm$ is polynomial ring of Krull dimension $(m+1)l=(m+1){\cdot}\ind\q$, freely generated by 
the $F_i^j$'s.

\textbullet \quad Clearly, $\deg F_i^j=\deg f_i$ for all $i$ and $j$. Therefore \\
\[
\sum_{i=1}^l\sum_{j=0}^{m} \deg F_i^j =(m+1) \sum_{i=1}^l \deg f_i=(m+1) \bb(\q)=\bb(\qm). \qedhere
\]
\end{proof}

\begin{cl}[cf.~{\cite[Thm.\,5.4]{savage}}]   \label{cl:new-proof-gener}
For any $r$-tuple $m_1,\dots,m_r$, the truncated multi-current algebra $\q\langle m_1,\dots,m_r\rangle$
has a polynomial ring of symmetric invariants.
\end{cl}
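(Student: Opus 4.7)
The plan is to proceed by induction on the number $r$ of parameters, reducing the multi-current case to an iterated application of Theorem~\ref{thm:main1}. The crucial observation is the algebra isomorphism
\[
   \q\langle m_1,\dots,m_{r-1},m_r\rangle \;\simeq\; \bigl(\q\langle m_1,\dots,m_{r-1}\rangle\bigr)\langle m_r\rangle,
\]
which follows at once from
$\bbk[T_1,\dots,T_r]/(T_1^{m_1+1},\dots,T_r^{m_r+1})\simeq \bigl(\bbk[T_1,\dots,T_{r-1}]/(T_1^{m_1+1},\dots,T_{r-1}^{m_{r-1}+1})\bigr)\otimes \bbk[T_r]/(T_r^{m_r+1})$. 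Thus a truncated multi-current algebra is just an iterated ordinary Takiff extension.

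For the base case $r=1$, if $\q$ is semisimple then $\bbk[\q^*]^Q$ is a graded polynomial ring in $\rk\q$ generators (Chevalley), and the locus $\Omega_{\q^*}$ coincides with the set of regular elements (Kostant), which is big. Hence the hypotheses of Theorem~\ref{thm:main1} are met, so $\q\langle m_1\rangle$ has a polynomial ring of symmetric invariants. More generally, the statement holds for any $\q$ satisfying the hypotheses of Theorem~\ref{thm:main1}.

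For the inductive step, I would like to apply Theorem~\ref{thm:main1} to the algebra $\tilde\q:=\q\langle m_1,\dots,m_{r-1}\rangle$ with parameter $m=m_r$. This requires that $\tilde\q$ itself satisfy the assumption: $\bbk[\tilde\q^*]^{\tilde Q}$ is a graded polynomial ring of Krull dimension $\ind\tilde\q$, and $\Omega_{\tilde\q^*}$ is big. But this is precisely what part \textsf{(ii)} of Theorem~\ref{thm:main1} delivers: it states that passing from $\q$ to $\q\langle m\rangle$ preserves all three of these properties. So the inductive hypothesis furnishes exactly the input needed at the next stage, and the induction closes.

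There is no real obstacle here beyond making sure the package of properties (graded polynomial ring of invariants, Krull dimension equal to the index, and bigness of~$\Omega$) is propagated simultaneously along the induction — which is exactly the strengthened form of the conclusion in Theorem~\ref{thm:main1}\textsf{(ii)}. In particular, the inductive argument also records the Krull dimension of the invariant ring as $(m_1+1)\cdots(m_r+1)\cdot\ind\q$, recovering the count in \cite[Thm.\,5.4]{savage} when $\q$ is semisimple.
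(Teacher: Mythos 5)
Your proposal is correct and follows essentially the same route as the paper: both rely on the isomorphism $\q\langle m_1,\dots,m_r\rangle\simeq\bigl(\dots(\q\langle m_1\rangle)\dots\bigr)\langle m_r\rangle$ and on the fact that Theorem~\ref{thm:main1}{\sf (ii)} propagates the full package of hypotheses (polynomial invariant ring of Krull dimension equal to the index, together with bigness of $\Omega$) along the iteration. Your write-up merely makes the induction explicit where the paper states it in one line.
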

\begin{proof}
A truncated multi-current algebra of any $\q$ is obtained as an iteration of various Takiff algebras. 
That is, 
\beq   \label{eq:multi-curr}
  \hat\q:=\q\langle m_1,\dots,m_r\rangle\simeq \Bigl(\dots\bigl((\q\langle m_1\rangle)\langle   
  m_2\rangle\bigr)\dots\Bigr)\langle m_r\rangle .
\eeq
Therefore, if $\q$ satisfies the hypotheses of Theorem~\ref{thm:main1}, then so is $\hat\q$. In particular,
$\bbk[\hat\q^*]^{\hat Q}$ is a polynomial ring. 
\end{proof}

Note that if $\q=\g$ is semisimple, then one can use 
results of~\cite{rt} only for the first iteration $\g\leadsto \g\langle m_1\rangle$, because 
afterwards the algebra in question becomes non-reductive.

\begin{rmk}    \label{rem:Igusa}
An essential point in our proof of Theorem~\ref{thm:main1} is the use of Theorem\,1.1 in~\cite{trio}. This 
ensures that the subalgebra $\ca$ is algebraically closed in $\bbk[\qm^*]$ and hence $\ca=\bbk[\qm^*]^{\Qm^u}$ for
the dimension reason. However, one can use instead an invariant-theoretic (geometric) argument related 
to Igusa's lemma (see e.g. \cite[Theorem\,4.12]{VP}) or ~\cite[Lemma\,6.1]{p05}).
Namely, consider the morphism 
\[
   \tau: \qm^* \to \q_{[m]}^*\times \mathbb A^{ml}=:Y
\]
given by $\tau(\bxi)=(\xi_{m}, F_1^1(\bxi),\dots,F_l^1(\bxi),\dots,F_1^{m}(\bxi),\dots,F_l^{m}(\bxi))$.
From the assumption on $\Omega_{\q^*}$ and a "triangular" form of $\{F_i^j\}$ (see~Prop.~\ref{prop:rt-1}(i)), one derives that 

(1)  \ $\Ima \tau\supset \Omega_{\q^*}\times \mathbb A^{ml}$, where the RHS is a big open subset of 
$Y$;

(2) \ for any $y\in \Omega_{\q^*}\times \mathbb A^{ml}$, the fibre $\tau^{-1}(y)$ is a sole $\Qm^u$-orbit.
\\
Then Igusa's lemma asserts that $\bbk[Y] \simeq \bbk[\qm^*]^{\Qm^u}$, i.e., $Y\simeq
\qm^*\md \Qm^u$ and $\tau=\pi_{\Qm^u}$. (Cf. the similar use of Igusa's lemma in~\cite[Theorems\,6.2 \& 11.1]{p05} and \cite[Theorem\,5.2]{coadj}.)
\end{rmk}

\section{Prehomogeneous vector spaces and rings of semi-invariants}
\label{sect:prehom}
\noindent
Here we show that some old results of Sato--Kimura~\cite{SK} on prehomogeneous vector spaces allow 
us to construct Lie algebras satisfying the hypotheses of Theorem~\ref{thm:main1}. 

\subsection{Prehomogeneous vector spaces}   \label{subs:SK}
Let $H\subset GL(V)$ be a representation of a connected group $H$ having an open orbit in 
$V$, i.e., $V$ is a {\it prehomogeneous vector space\/} w.r.t. $H$. By~\cite[\S\,4]{SK}, the algebra
of $H$-semi-invariants in $\bbk[V]$, denoted $\bbk[V]^{\langle H\rangle}$, is polynomial. More precisely,
let $\co\subset V$ be the open $H$-orbit and $D_1,\dots, D_l$ all simple divisors in $V\setminus\co$ (we do not need the irreducible components of codimension $\ge 2$ in $V$). If $D_i=\{f_i=0\}$, then
$f_i\in \bbk[V]^{\langle H\rangle}$, $f_1,\dots,f_l$ are algebraically independent, and 
$\bbk[V]^{\langle H\rangle}=\bbk[f_1,\dots,f_l]$. Moreover, let $\lb_i: H\to \bbk^\times$ be the $H$-character corresponding to $f_i$, i.e., $h{\cdot}f_i=\lb_i(h)f_i$ for all $h\in H$. Then the differentials of $\lb_i$'s are linearly independent and 
$\tilde H:=\{h\in H\mid \lb_i(h)=1 \ \ \forall i\}^o$ is of codimension $l$ in $H$. Then 
$[H,H]\subset \tilde H\subset H$ and
$\bbk[V]^{[H,H]}=\bbk[V]^{\tilde H}=\bbk[V]^{\langle H\rangle}$ is a polynomial ring.

\subsection{Frobenius Lie algebras}    \label{subs:Frob}
Suppose that $\ind\h=0$, i.e., $\h$ is {\it Frobenius}. Then $H$ has an open orbit in $\h^*$ and the above 
results apply to $V=\h^*$. Then $\bbk[\h^*]^{\langle H\rangle}=\bbk[\h^*]^{\tilde H}$ is a polynomial ring 
of Krull dimension $\dim H-\dim \tilde H=\ind\tilde\h$. Note that 
\[ \bbk[\tilde\h^*]=\gS(\tilde\h)\subset\gS(\h)=\bbk[\h^*],  \] 
and an important additional feature of the ``coadjoint'' situation is that 
$\bbk[\h^*]^{\tilde H}\subset \bbk[\tilde\h^*]$, see~\cite[Kap.\,II,\,\S\,6]{bgr}. Hence 
$\bbk[\h^*]^{\tilde H}=\bbk[\tilde\h^*]^{\tilde H}$, i.e., $\tilde\h$ has a polynomial ring of symmetric 
invariants whose Krull dimension equals $\ind\tilde\h$.  By the very construction, $\tilde H$ has no proper 
semi-invariants in $\bbk[\h^*]$ and hence in $\bbk[\tilde\h^*]$. It then follows from Proposition~\ref{prop:JS} that 
$\Omega_{\tilde\h^*}$ is big. Thus, Theorem~\ref{thm:main1} applies to $\tilde\h$, and hence 
$\tilde\h\langle m\rangle$ has a polynomial ring of symmetric invariants for any $m\ge 1$. 

\begin{rmk}   \label{rem:BGR}
More generally, for {\bf any} Lie algebra $\h$, the ring of symmetric semi-invariants 
$\bbk[\h^*]^{\langle H\rangle}$ (i.e., the {\it Poisson semi-centre\/} of $\gS(\h)=\bbk[\h^*]$) is isomorphic 
to the ring of symmetric invariants of a canonically defined subalgebra 
$\tilde\h\subset\h$~\cite[Kap.\,II,\,\S\,6]{bgr}, see also ~\cite[Sect.\,3]{fonya-et}. The subalgebra 
$\tilde\h$ is called the {\it canonical truncation\/} of $\h$. It has the property that
$\dim\h-\dim\tilde\h=\ind\tilde\h-\ind\h$~\cite[Lemma\,3.7]{fonya-et}, hence $\bb(\h)=\bb(\tilde\h)$.
Furthermore, since $\tilde H$ has no proper semi-invariants in $\bbk[\tilde\h^*]$, $\bbk(\tilde\h^*)^{\tilde H}$ is the field of fractions of $\bbk[\tilde\h^*]^{\tilde H}$ and the Krull dimension of $\bbk[\tilde\h^*]^{\tilde H}$ equals $\ind\tilde\h$.
Therefore, if $\bbk[\h^*]^{\langle H\rangle}=\bbk[\tilde\h^*]^{\tilde H}$ is a polynomial ring, then  
Proposition~\ref{prop:JS} and Theorem~\ref{thm:main1} apply 
to $\tilde\h$, and hence $\tilde\h\langle m\rangle$ has a polynomial ring of symmetric invariants for all $m\ge 1$. 
In the special case, where $\h$ is Frobenius, this is already explained in the previous paragraph.
\end{rmk}

Let us illustrate this theory in both Frobenius and non-Frobenius cases. 

\begin{ex}    \label{ex:b-ind=0}
Let $G$ be a simple algebraic group with $\Lie(G)=\g$, $\be$ a Borel subalgebra of $\g$, and 
$[\be,\be]=\ut$. The corresponding connected subgroups of $G$ are $B$ and $U$. Here we are 
interested in the symmetric invariants of $\be$, $\ut$, and the canonical truncation of $\be$. Most of 
these results are due to Kostant~\cite{kost} and Joseph~\cite{jos77}. (Actually, many Kostant's results 
are rather old and had been cited in \cite{jos77}.) Our idea is to demonstrate utility of the Sato--Kimura 
theory in this context.
\par
$({\color{MIXT}\lozenge_1})$ \ If $\ind \be=0$, then $\ind\ut=\rk\g$ and $\tilde\be=[\be,\be]=\ut$. Hence
$\gS(\be)^U=\gS(\ut)^U$ is a polynomial ring of Krull dimension $\rk\g$. As explained above, Theorem~\ref{thm:main1} applies to $\ut=\tilde\be$. 
It is well known that  $\ind\be=0$ {\sl if and only if\/} 
 $\g\in\{\GR{B}{n}, \GR{C}{n}, \GR{D}{2n}, \GR{E}{7}, \GR{E}{8}, \GR{F}{4}, \GR{G}{2}\}$.  

Let $f_1,\dots,f_{\rk\g}$ be the basic invariants in $\gS(\ut)^U$. Their weights and degrees are pointed out 
in~\cite[Tables\,I,II]{jos77}, with some corrections in \cite[Annexe\,A]{fj}. It follows from those data that 
$\sum_{i=1}^{\rk\g} \deg f_i < \bb(\ut)=\frac{1}{2}\dim\be$ unless $\g= \GR{C}{n}$. This means that, for all 
but one case, the {\sl codim}--$2$ property does not hold for $\ut$ (use 
Theorem~\ref{thm:coadj1.2}\,!). 
\par
$({\color{MIXT}\lozenge_2})$ \ If $\ind\be>0$, then $\ind\ut <\rk\g$ and $\gS(\ut)^U$ is a proper 
subalgebra of $\gS(\be)^U$. (Actually, one always has $\ind\ut+\ind\be=\rk\g$.) There are two possibilities to construct a suitable subalgebra of $\be$: one is related to the Sato--Kimura approach, and the other exploits the canonical truncation.

{\bf $\langle$-\,1\,-$\rangle$} 
Since $B$ has a dense orbit in $\ut^*$~\cite{kost}, one applies Sato--Kimura results to $V=\ut^*$, $H=B$, and $U=[B,B]$. This shows that $\gS(\ut)^U$ is still a polynomial ring. Moreover,
$\Omega_{\ut^*}$ is big for the same reason as above. 
For all these cases (i.e., $\g\in\{\GR{A}{n}, \GR{D}{2n+1}, \GR{E}{6}\}$), we have $\sum_i \deg f_i <\bb(\ut)$. 
Hence there is no {\sl codim}--$2$ property for $\ut$, but Theorem~\ref{thm:main1} applies to $\ut$.

{\bf $\langle$-\,2\,-$\rangle$} Now, the canonical truncation of $\be$  is a subalgebra that properly  
contains $\ut$. Namely, the toral part of $\tilde\be$ has dimension $\ind\be$. If $\be=\te\oplus\ut$ and 
$\Delta^+$ is the set of positive roots (=\,roots of $\ut$), then one canonically constructs the cascade 
$\ck$ of strongly orthogonal roots in $\Delta^+$ ({\it Kostant's cascade}), see \cite[Section\,2]{jos77}. If 
$\ck=\{\gamma_1,\dots,\gamma_t\}$, then $\ind\be=\dim\te-t$ and $\tilde\be=\tilde\te\oplus\ut$, where 
$\tilde\te=\{\gamma_1,\dots,\gamma_t\}^\perp$. Thus, we obtain that
\[
    \bbk[\be^*]^U=\bbk[\be^*]^{\langle B\rangle}=\bbk[\tilde\be^*]^{\tilde B} .
\]
By~\cite[4.16]{jos77},  $\gS(\be)^U$ is a polynomial ring of Krull dimension $\rk\g$. Hence 
Theorem~\ref{thm:main1} applies to $\tilde\be$.
\\ \indent
The output of this example is that, {\sl for any simple Lie algebra $\g$, our main theorem applies to both
$\tilde\be$ (the canonical truncation of $\be$) and\/ $\ut=[\be,\be]$. These two subalgebras of\/ $\be$ 
coincide if and only if\/ $\be$ is Frobenius.}
\end{ex}

\section{More examples}
\label{sect:ex-from-kot}
\noindent
We provide other applications of Theorem~\ref{thm:main1} to Lie algebras with or without the 
{\sl codim}--$2$ property.

\begin{ex}    \label{ex:sln}
Let $G\subset SL(V)$ be a representation of a connected semisimple algebraic group. Consider the 
semi-direct product  $\q=\g\ltimes V^{\sf ab}$. The corresponding connected group $Q=G\ltimes \exp(V)$ 
has no non-trivial characters, hence $\bbk[\q^*]$ does not contain proper $Q$-semi-invariants.
Therefore, if (we know that) $\bbk[\q^*]^Q$ is a polynomial ring, then $\Omega_{\q^*}$ is big (use 
Proposition~\ref{prop:JS}) and Theorem~\ref{thm:main1} applies to $\q$. The classification of 
representation $(G:V)$ of {\bf simple} algebraic groups $G$ such that $\bbk[\q^*]^Q$ is a polynomial ring 
is the subject of an ongoing project initiated by the second author. First non-trivial results for $G=SL_n$ 
are found in \cite{Y}, and the representations of the exceptional groups are considered 
in~\cite{kot-except}. The representations of $SO_n$ and $Sp_{2n}$ will be handled in our forthcoming 
publication.
   (However, it is not always easy to decide whether the {\sl codim}--$2$ property holds for such $\q$.)

Consider a concrete elementary example, where everything can be verified by hand. 
For an $n$-dimensional vector space $V$ with $n\ge 2$, take the semi-direct product
$\q=\slv\ltimes n\BV=\sln\ltimes n\bbk^n$. 
The elements of $n\BV$ (resp. $n\BV^*$) are regarded as $n\times n$ matrices, where $\sln$ acts via left (resp. right)
multiplications. Since $\bbk[n\BV^*]^{SL(\BV)}=\bbk[\det]$ and generic stabilisers for the action
$(SL(\BV): n\BV)$ are trivial, we have
\[
     \bbk[\q^*]^Q=\bbk[n\BV^*]^{SL(\BV)}=\bbk[\det] .
\]
(The first equality here stems from \cite[Theorem\,6.4]{p05}.)
Hence $\ind\q=1$ and $\bb(\q)=n^2$. For an $n\times n$ matrix $\eta$, one has
$\textsl{d}(\det)_\eta= 0  \Leftrightarrow  \rk \eta <  n-1$. Therefore, $\textsl{d}(\det)$ vanishes on the
determinantal variety of matrices of rank $\le n-2$, which is of codimension $4$ in $n\BV^*$. 
Thus, $\Omega_{\q^*}$ is big.

On the other hand, $\q^*_{\sf reg}=\sln^*\times (n\BV)^*_{\det}$ is a principal open subset, i.e.,
$\q^*\setminus \q^*_{\sf reg}=\sln^*\times \{\det=0\}$ is a divisor. Hence the {\sl codim}--$2$ property 
does not hold here. This also follows from the fact that $n=\deg(\det) < \bb(\q)=n^2$.
\end{ex}

\begin{ex}   \label{ex:Heis-n}
Let $\mathfrak{Hei}_n$ be the Heisenberg Lie algebra of dimension $2n+1$. It has a basis 
$x_1,\dots,x_n,y_1,\dots,y_n,z$ such that the only nonzero brackets are $[x_i,y_i]=z$, $i=1,\dots,n$.
Then $\ind(\mathfrak{Hei}_n)=1$ and  $\bbk[\mathfrak{Hei}_n^*]^{\mathfrak{Hei}_n}=\bbk[z]$.
Therefore, $\Omega_{\mathfrak{Hei}_n}=\mathfrak{Hei}_n^*$ and Theorem~\ref{thm:main1} applies
here. It is easily seen that the hyperplane $\{\xi\in \mathfrak{Hei}_n^*\mid {<}\xi,z{>}=0\}$ consists of 
the fixed points of the Heisenberg group. Hence, $\mathfrak{Hei}_n$ does not have the {\sl codim}--$2$ 
property.

This has the following application to centralisers of nilpotent elements: 
\\ 
{\sl Let $G$ be a simple group of type $\GR{G}{2}$. If $G{\cdot}e\subset \g$ is the subregular nilpotent orbit, then $\dim\g_e=4$ and $\g_e\simeq \mathfrak{Hei}_1\oplus \bbk e$. }
\end{ex}

\begin{ex}   \label{CM-primery}
Let $e\in \g$ be nilpotent. Methods of~\cite{MZ} provide the polynomiality of $\bbk[\g_e^*]^{\g_e}$ for 
some nilpotent orbits that are not treated in \cite{trio}. In particular, Tables~2 and 3 in \cite{MZ} list such 
orbits for $G$ of type $\GR{E}{6}$ and $\GR{F}{4}$. For those of them, where the reductive part of 
$\g_e$ is semisimple, we know for sure that $\bbk[\g_e^*]$ has no proper $G_e^o$-semi-invariants, and 
hence Theorem~\ref{thm:main1} applies. Specifically, the four suitable $\GR{E}{6}$-orbits have the Dynkin-Bala-Carter labels $\GR{E}{6}(a_3), \GR{A}{5}, \GR{D}{4}, 2\GR{A}{2}+\GR{A}{1}$, whereas all
six $\GR{F}{4}$-orbits are suitable for us.
\end{ex}

\begin{ex}    \label{ex:borel-contr}
Associated with any parabolic subalgebra $\p$ of $\g$, there is an interesting contraction of $\g$, which is called a {\it parabolic contraction}, see~\cite{alafe2}. If $\p=\be$, then such a contraction has much better properties~\cite{alafe1}. Let $\be^-$ be an opposite Borel and $\ut^-=[\be^-,\be^-]$. Then
$\g=\be\oplus\ut^-$ is a vector space sum. The contraction in question is
$\q:=\be\ltimes (\ut^-)^{\sf ab}$, where $(\ut^-)^{\sf ab}$ is an abelian ideal of $\q$ and $(\ut^-)^{\sf ab}$
is regarded as $\be$-module via isomorphism $\g/\be \simeq \ut^-$. Note that $\q$ is solvable.

By~\cite[Section\,3]{alafe1}, we have {\sf (1)} \ $\ind\q=\rk\g$, {\sf (2)} \ $\bbk[\q^*]^Q$ is a 
polynomial ring, and {\sf (3)} \ the degrees of basic invariants are the same as those for $\g$. In particular, 
$\bb(\q)=\bb(\g)$ and if $f_1,\dots,f_l$ are the basic invariants in $\bbk[\q^*]^Q$, then 
$\sum_{i=1}^l \deg f_i=\bb(\q)$.

However, $\q$ does not have the {\sl codim}--$2$ property unless $\g$ is of type 
$\GR{A}{l}$~\cite[Theorem\,4.2]{alafe1}. Furthermore, $\Omega_{\q^*}$ is not big, if 
$\g\ne\GR{A}{l}$~\cite[Remark\,5.3]{contr}. Therefore, Theorem~\ref{thm:main1} does not apply  to
$\be\ltimes (\ut^-)^{\sf ab}$, if $\g\ne\GR{A}{l}$. But one can look at the canonical truncation of $\q$,
where the situation improves considerably. Following~\cite[Sect.\,5]{contr}, consider 
\[
   \tilde\q=\ut\ltimes (\ut^-)^{\sf ab} \subset \be\ltimes (\ut^-)^{\sf ab}=\q .
\]
Here one has $\tilde\q=[\q,\q]$, 
$\ind\tilde\q=\ind\q+ (\dim\q-\dim\tilde\q)=2\rk\g$, and hence $\bb(\tilde\q)=\bb(\q)=\bb(\g)$.
By~\cite[Theorem\,5.9]{contr},  $\gS(\tilde\q)^{\tilde\q}$ is a polynomial ring of Krull dimension $2\rk\g$.
The situation with the {\sl codim}--$2$ property for $\tilde\q$ remains the "same" as for $\q$, but 
$\Omega_{\tilde\q^*}$ is already a big open subset of $\tilde\q^*$ (see the proof of Theorem\,5.9 in \cite{contr}).  Thus,
Theorem~\ref{thm:main1} applies to $\tilde\q$ for {\bf all} simple $\g$.
\end{ex}

\begin{ex}    \label{ex:z2-contr}
Let $\g=\g_0\oplus\g_1$ be a $\BZ_2$-grading of a simple Lie algebra $\g$ and 
$\q=\g_0\ltimes\g_1^{\sf ab}$ the related $\BZ_2$-contraction. Then  $\ind\q=\rk\g$ and the 
{\sl codim}--$2$ property is always satisfied here (see~\cite{coadj}). Here $\g_0$ is reductive but not 
necessarily semisimple, and $\bbk[\q^*]^Q$ is a polynomial 
ring (in $\rk\g$ variables) if and only if the restriction homomorphism $\bbk[\g]^G \to \bbk[\g_1]^{G_0}$ is 
onto~\cite[Sect.\,6]{Y-imrn}. This excludes only four $\BZ_2$-gradinds related to the algebras of type $\GR{E}{n}$.
\end{ex}

\begin{ex}    \label{ex:seaweed}
Let $\p$ and $\p'$ be two parabolic subalgebras of $\g$ such that $\p+\p'=\g$. Then $\es=\p\cap\p'$ is
called a {\it seaweed} (or {\it biparabolic}) subalgebra of $\g$~\cite{seaweed}. By work of Joseph and his 
collaborators, it is known in many cases that $\bbk[\es^*]^{\langle S\rangle}$ is a polynomial ring. In 
particular, this is true for any $\es$, if $\g$ is of type $\GR{A}{n}$ or $\GR{C}{n}$~\cite{J}. (See also a summary of known results and other good cases in~\cite{FP}.)
Therefore, in all such good cases,
the canonical truncation of $\es$ (=\,{\it truncated biparabolic\/} in Joseph's terminology) is a good example for 
Theorem~\ref{thm:main1}.
\end{ex}

\section{On the equidimensionality}
\label{sect:Eq}

\noindent
Whenever a connected algebraic group $Q$ has the property that $\bbk[\q^*]^Q$ is a polynomial ring, it 
is natural to inquire whether it is true that $\bbk[\q^*]$ is a free $\bbk[\q^*]^Q$-module.  The latter is
equivalent to that the enveloping algebra $\mathcal U(\q)$ is a free module over its centre 
$\mathcal Z(\q)\simeq \bbk[\q^*]^Q$. Assuming that $\bbk[\q^*]^Q$ is a polynomial ring, i.e., $\q^*\md Q$ is an affine space, the well-known 
geometric answer to this inquiry is that \\[.6ex]
\centerline{
$\bbk[\q^*]$ is a free $\bbk[\q^*]^Q$-module {\it if and only if\/} $\pi_Q:\q^*\to \q^*\md Q$ is equidimensional,}
\\[.6ex]
i.e., equivalently, the zero-fibre of $\pi_Q$, $\pi_Q^{-1}(\pi_Q(0))$, has the `right' dimension $\dim\q-\dim\q^*\md Q$. In the setting of Takiff algebras, one can raise the following: 

\begin{qtn} Suppose that the hypotheses of Theorem~\ref{thm:main1} hold for $\q$ and $\pi_Q$ is equidimensional. Is it true that 
$\pi_{\Qm}: \qm^*\to \qm^*\md\Qm$ is equidimensional, too?
\end{qtn}
\noindent
As we shall see below, the general answer to this question is ``no''.
The celebrated positive result is that if $\g$ is semisimple, then the zero-fibre of $\pi_{\Gm}$ 
is irreducible and $\pi_{\Gm}$ is equidimensional for any $m\in\BN$~\cite[Appendix]{must}. The reason
is that the usual nilpotent cone $\gN\subset\g\simeq\g^*$ is an irreducible complete intersection, and it 
has rational singularities. Here $\gN\langle m\rangle:=\pi_{\Gm}^{-1}(\pi_{\Gm}(0))$ is a jet scheme of 
$\gN$. 

For $m=1$, these results are obtained in~\cite{geof} via a case-by-case argument. (See also another 
approach and a generalisation in~\cite[Theorem\,10.2]{p05}.) 

In this section, we prove that the equidimensionality does 
not carry over to the multi-current setting, even for semisimple $\g$. 
Let $\hat\q=\q\langle m_1,\dots,m_r\rangle$ be a truncated multi-current algebra of $\q$, 
cf.~\eqref{eq:multi-car}. As in Section~\ref{sect:Takiff-inv}, we can write $\hat\q=
\bigoplus_{i_1,\dots,i_r}\q_{[i_1,\dots,i_r]}$ and likewise for $\hat\q^*$, where 
$0\le i_j\le m_j, \ j=1,\dots,r$. It then follows from \eqref{eq:qm-reg} and the iteration process~\eqref{eq:multi-curr} that 
\[
   \bxi=(\xi_{[i_1,\dots,i_r]})\in \hat\q^*_{\sf reg} \ \Longleftrightarrow \ \xi_{[m_1,\dots,m_r]}\in \q^*_{\sf reg} 
\]
(see also Prop.\,4.1(b) in \cite{savage}).
Assume that $\q$ satisfies all the assumptions of Theorem~\ref{thm:main1} and set
$\N=\pi_Q^{-1}(\pi_Q(0))\subset \q^*$. Then 
$\N\langle m_1,\dots,m_r\rangle\subset \hat\q^*$ stands for the zero-fibre of 
$\pi_{\hat Q}:\hat\q^*\to \hat\q^*\md\hat Q$. We work below with the case in which all $m_i=1$. Then 
$\hat\q$ is obtained as iteration of semi-direct products, the first step being 
$\q\leadsto \q\ltimes\q^{\sf ab}=\qq$. Let us investigate the relation between $\N$ and $\NN$.
This will also apply below to the passage from $\NN$ to $\NNN$.

Recall that $\bxi=(\xi_0,\xi_1)$ is an element of $\qq^*$.
If $\bbk[\q^*]^Q=\bbk[f_1,\dots,f_l]$ with $l=\ind\q$, then $\bbk[\qq^*]^{\QQ}$ is freely generated by $F_1^0,\dots,F_l^0,F_1^1,\dots,F_l^1$, where $F_i^0$ depends only on $\xi_1$ and
 $F_i^1(\xi_0,\xi_1)={<}(\textsl{d}f_i)_{\xi_{1}},\xi_{0}{>}_\q$. Therefore
 \beq      \label{eq:N1}
    \NN=\{(\xi_0,\xi_1) \mid \xi_1\in\N \ \& \ {<}(\textsl{d}f_i)_{\xi_{1}},\xi_{0}{>}_\q=0 \ \forall i\}.
 \eeq
Since $\textsl{d}f_i$ is a $Q$-equivariant morphism from $\q^*$ to $\q$, we have 
$(\textsl{d}f_i)_{\xi}\in \q_{\xi}$. Moreover, if $\xi\in\q^*_{\sf reg}\cap \Omega_{\q^*}$, then
$\{(\textsl{d}f_1)_{\xi},\dots,(\textsl{d}f_l)_{\xi}\}$ is a basis for $\q_\xi$.
Consider the stratification of $\N$ determined by the basic invariants $f_1,\dots,f_l$. Set
\[
   \mathfrak X_{i,\N}=\{\xi\in\N\mid 
   \dim \mathsf{span}\bigl(\{(\textsl{d}f_1)_{\xi},\dots,(\textsl{d}f_l)_{\xi}\}\bigr)\le i\}.
\]
Then $\{0\}=\mathfrak X_{0,\N}\subset \mathfrak X_{1,\N}\subset\dots \subset \mathfrak X_{l,\N}=\N$. If
$\N=\bigcup_j\N_j$ is the irreducible decomposition, then $\mathfrak X_{i,\N_j}$ is similarly defined for 
any $j$. Set ${\mathfrak X}^o_{i,\N_j}=\mathfrak X_{i,\N_j}
\setminus \mathfrak X_{i-1,\N_j}$ for $i>0$ and ${\mathfrak X}^o_{0,\N_j}=\{0\}$. Clearly, each
${\mathfrak X}^o_{i,\N_j}$ is irreducible and open in ${\mathfrak X}_{i,\N_j}$. However,
${\mathfrak X}^o_{i,\N_j}$ can be empty for some $i,j$.
It follows from \eqref{eq:N1} that
$p:  \NN\to \N$, $(\xi_0,\xi_1)\mapsto \xi_1$, is a surjective projection and
\[
  \dim p^{-1}({\mathfrak X}^o_{i,\N_j})=\dim {\mathfrak X}^o_{i,\N_j}+\dim\q-i .
\]
Since $\qq$ has a polynomial ring of symmetric invariants, with $2l$ basic invariants
$F_1^0\dots,F_l^0,F_1^1,\dots,F_l^1$, one can consider the corresponding stratification of
$\NN$: \ 
\[
  \{0\}=\mathfrak X_{0,\NN}\subset \mathfrak X_{1,\NN}\subset\dots \subset \mathfrak X_{2l,\NN}=\NN .
\]
\begin{lm}   \label{lm:2-stratific}
We have  $\displaystyle p^{-1}({\mathfrak X}^o_{i,\N}) \subset 
\bigcup_{j=2i}^{l+i} {\mathfrak X}^o_{j,\NN}$.
\end{lm}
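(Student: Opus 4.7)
The plan is to perform an explicit rank computation for the $2l$ differentials $\{(\textsl{d}F_k^j)_\bxi : 1\le k\le l,\ 0\le j\le 1\}$ at a point $\bxi=(\xi_0,\xi_1)\in p^{-1}(\mathfrak X^o_{i,\N})$, working in the direct sum decomposition $\qq=\q_{[0]}\oplus\q_{[1]}$.

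First I will specialise Proposition~\ref{prop:rt-1} to $m=1$: since $\eps^2=0$, the expansion of $f_k(\xi_1+\eps\xi_0)$ terminates at linear order, so $F_k^0(\bxi)=f_k(\xi_1)$ and $F_k^1(\bxi)=\langle(\textsl{d}f_k)_{\xi_1},\xi_0\rangle_\q$ with no higher correction. A direct coordinate computation (using the pairing $\langle\bx,\bxi\rangle_\qq=\langle x_0,\xi_0\rangle_\q+\langle x_1,\xi_1\rangle_\q$) then yields
\[
(\textsl{d}F_k^0)_\bxi=\bigl(0,\,(\textsl{d}f_k)_{\xi_1}\bigr),\qquad (\textsl{d}F_k^1)_\bxi=\bigl((\textsl{d}f_k)_{\xi_1},\,\ast_k\bigr)\in\q_{[0]}\oplus\q_{[1]},
\]
where $\ast_k$ is the Hessian of $f_k$ at $\xi_1$ applied to $\xi_0$; its precise form will be irrelevant.

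Next, set $V:=\mathsf{span}\{(\textsl{d}F_k^j)_\bxi\}_{k,j}\subset\qq$. I will bound $\dim V$ via the projection $\pi_0:V\to\q_{[0]}$. By construction the image $\pi_0(V)$ equals $\mathsf{span}\{(\textsl{d}f_k)_{\xi_1}\}_k$, which has dimension exactly $i$ because $\xi_1\in\mathfrak X^o_{i,\N}$; hence $\dim V=i+\dim(V\cap\q_{[1]})$. The intersection $V\cap\q_{[1]}$ visibly contains the $i$-dimensional subspace spanned by the $(\textsl{d}F_k^0)_\bxi$'s, so $\dim(V\cap\q_{[1]})\ge i$. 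The further $\q_{[1]}$-contribution can only come from combinations $\sum_k b_k(\textsl{d}F_k^1)_\bxi$ whose $\q_{[0]}$-component $\sum_k b_k(\textsl{d}f_k)_{\xi_1}$ vanishes, i.e.\ from coefficient vectors $(b_1,\dots,b_l)$ in the $(l-i)$-dimensional kernel of the map $\bbk^l\to\q$, $e_k\mapsto(\textsl{d}f_k)_{\xi_1}$. This gives $\dim(V\cap\q_{[1]})\le i+(l-i)=l$, and therefore $2i\le\dim V\le l+i$.

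Since the stratum $\mathfrak X^o_{j,\NN}$ is determined by the condition $\dim V=j$, this bound immediately places $\bxi\in\mathfrak X^o_{j,\NN}$ for some $j\in\{2i,2i+1,\dots,l+i\}$, which is exactly the claimed inclusion. The argument is in the end straightforward block linear algebra; the only point that must be handled with care is the upper bound: one has to notice that the Hessian entries $\ast_k$ can increase $\dim V$ only through those combinations of the $(\textsl{d}F_k^1)_\bxi$'s that are already forced into $\q_{[1]}$, which is what keeps the rank below the naive $2l$ and produces the sharp ceiling $l+i$.
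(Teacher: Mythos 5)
Your proof is correct and takes essentially the same route as the paper's: both arguments rest on the block\hbox{-}triangular shape of the differentials $(\textsl{d}F_k^0)_\bxi$ and $(\textsl{d}F_k^1)_\bxi$ with respect to the decomposition $\qq=\q_{[0]}\oplus\q_{[1]}$ (Table~\ref{table:differ} with $m=1$), giving $2i\le \dim V\le l+i$. You simply make explicit the rank computation that the paper's two-sentence proof leaves implicit (and your upper bound via the kernel of $(b_k)\mapsto\sum_k b_k(\textsl{d}f_k)_{\xi_1}$ could even be shortened to the trivial count $\dim V\le i+l$, since there are only $l$ of the $(\textsl{d}F_k^1)_\bxi$).
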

\begin{proof}
By definition, $\dim \mathsf{span}\bigl(\{(\textsl{d}f_1)_{\xi},\dots,(\textsl{d}f_l)_{\xi}\}\bigr)=i$ for
$\xi\in {\mathfrak X}^o_{i,\N}$. This clearly implies that, for $\bxi=(\xi_0,\xi)\in p^{-1}(\xi)$, we have
$\dim \mathsf{span}\bigl(\{(\textsl{d}F_1^0)_{\bxi},\dots,(\textsl{d}F_l^0)_{\bxi}\}\bigr)= i$ and 
$\dim \mathsf{span}\bigl(\{(\textsl{d}F_1^1)_{\bxi},\dots,(\textsl{d}F_l^1)_{\bxi}\}\bigr)\ge i$ (cf. Table~\ref{table:differ} with $m=1$). Furthermore, the lowest components of $(\textsl{d}F_j^0)_{\bxi}$
and $(\textsl{d}F_j^1)_{\bxi}$ belong to different graded pieces of $\qq$.
\end{proof}
By Lemma~\ref{lm:2-stratific}, the closures of $p^{-1}({\mathfrak X}^o_{l,\N_j})$ for all $j$ are the only subvarieties of $\NN$ that meet $\Omega_{\qq^*}$. Therefore, if 
${\mathfrak X}^o_{l,\N_j}\ne\varnothing$, then $\overline{p^{-1}({\mathfrak X}^o_{l,\N_j})}$ is an irreducible component of $\NN$ of dimension $\dim\N_j+\dim\q-l$.
Since $\dim\N_j\ge \dim\q-l$ for all $j$, one readily obtains

\begin{prop}  \label{prop:estimate}
If\/ $\q$ satisfies all the assumptions of Theorem~\ref{thm:main1}, then the following two
conditions are equivalent:
\begin{enumerate}
\item \ $\pi_\QQ$ is equidimensional, i.e., $\dim\NN=\dim\qq-\ind\qq=2(\dim\q-l)$;
\item 
\begin{itemize}
\item[\sf (i)]  \ $\pi_Q$ is equidimensional, i.e., $\dim\N_j=\dim\q-l$ for all $j$;
\item[\sf (ii)] \  ${\mathfrak X}^o_{l,\N_j}\ne \varnothing$ for all $j$ (i.e., 
$\N_j\cap\Omega_{\q^*}\ne \varnothing$);
\item[\sf (iii)] \ $\codim_{\N_j}({\mathfrak X}^o_{i,\N_j})\ge l-i$ for $i<l$.
\end{itemize}
\end{enumerate}
\end{prop}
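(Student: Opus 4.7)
The plan is to analyze the cover $\NN=\bigcup_{i,j} p^{-1}(\mathfrak X^o_{i,\N_j})$ via the fibre-dimension formula $\dim p^{-1}(\mathfrak X^o_{i,\N_j}) = \dim \mathfrak X^o_{i,\N_j} + \dim \q - i$ recorded in the text (the fibre at $\xi_1$ being the annihilator in $\q$ of the $i$-dimensional span $\mathsf{span}\{(\textsl{d}f_k)_{\xi_1}\}$), and to compare these dimensions against the target value $2(\dim\q-l)$.

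For (1)$\Rightarrow$(2), suppose $\dim\NN = 2(\dim\q-l)$. For each irreducible component $\N_j$ of $\N$, let $i^*_j$ be the maximal $i$ with $\mathfrak X^o_{i,\N_j}\ne\varnothing$. From $p^{-1}(\mathfrak X^o_{i^*_j,\N_j})\subseteq\NN$,
\[
  \dim\N_j+\dim\q-i^*_j \le 2(\dim\q-l).
\]
Combined with the universal lower bound $\dim\N_j \ge \dim\q-l$ recalled in the text, this forces $i^*_j\ge l$, hence $i^*_j=l$, which is (ii); feeding $i^*_j=l$ back yields $\dim\N_j\le \dim\q-l$, hence (i). For (iii), the inclusion $p^{-1}(\mathfrak X^o_{i,\N_j})\subseteq \NN$ gives $\dim\mathfrak X^o_{i,\N_j}+\dim\q-i\le 2(\dim\q-l)$, which using (i) rearranges to $\codim_{\N_j}(\mathfrak X^o_{i,\N_j})\ge l-i$.

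For the converse (2)$\Rightarrow$(1), by (i) and (iii) (the case $i=l$ being trivial) every stratum satisfies
\[
  \dim p^{-1}(\mathfrak X^o_{i,\N_j}) \le (\dim\N_j-(l-i)) + \dim\q - i = 2(\dim\q-l),
\]
so $\dim\NN\le 2(\dim\q-l)$. The matching lower bound is immediate from (ii): any $j$ with $\mathfrak X^o_{l,\N_j}\ne\varnothing$ yields, by the text preceding the proposition, an irreducible component $\overline{p^{-1}(\mathfrak X^o_{l,\N_j})}$ of $\NN$ of dimension $\dim\N_j+\dim\q-l = 2(\dim\q-l)$. There is no real obstacle once the stratification and fibre-dimension formula are in place (they already are in the text); the only delicate step is the extraction of (ii) in the forward direction, which crucially requires applying the lower bound $\dim\N_j\ge\dim\q-l$ to each irreducible component $\N_j$ separately rather than to $\N$ in bulk.
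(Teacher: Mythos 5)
Your proposal is correct and follows essentially the same route as the paper: the stratum-by-stratum dimension count based on the formula $\dim p^{-1}({\mathfrak X}^o_{i,\N_j})=\dim {\mathfrak X}^o_{i,\N_j}+\dim\q-i$, the componentwise lower bound $\dim\N_j\ge\dim\q-l$, and the fact that $\overline{p^{-1}({\mathfrak X}^o_{l,\N_j})}$ is a component of $\NN$ of dimension $\dim\N_j+\dim\q-l$, which is exactly the argument the paper compresses into ``one readily obtains''.
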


This yields a sufficient condition for the absence of equidimensionality of $\pi_\QQ$:

\begin{cl}     \label{cl:not-eq}
If there is an irreducible component $\N_j$ of $\N$ such that $\N_j\cap\Omega_{\q^*}= \varnothing$,
then $\dim p^{-1}(\N_j)>2(\dim\q-l)$. Hence $\pi_\QQ$ is not equidimensional. 
\end{cl}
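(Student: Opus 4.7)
The plan is to exploit the explicit fiber-dimension formula for $p\colon \NN\to\N$ established just before Lemma~\ref{lm:2-stratific}, together with the stratification $\N_j=\bigsqcup_i\mathfrak X^o_{i,\N_j}$.

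First I would observe that the hypothesis $\N_j\cap\Omega_{\q^*}=\varnothing$ is equivalent to $\N_j=\mathfrak X_{l-1,\N_j}$: at no point of $\N_j$ are all the $(\textsl{d}f_s)_\xi$ linearly independent. Since $\N_j$ is irreducible and is a finite union of the locally closed strata $\mathfrak X^o_{i,\N_j}$ with $i\le l-1$, there is a maximal index $i_0\le l-1$ with $\mathfrak X^o_{i_0,\N_j}\ne\varnothing$, and then $\overline{\mathfrak X^o_{i_0,\N_j}}=\N_j$. In particular $\dim\mathfrak X^o_{i_0,\N_j}=\dim\N_j$.

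Next I would invoke the formula $\dim p^{-1}(\mathfrak X^o_{i,\N_j})=\dim\mathfrak X^o_{i,\N_j}+\dim\q-i$, valid because the fiber of $p$ over any $\xi\in\mathfrak X^o_{i,\N_j}$ is the linear subspace of $\q^*$ cut out by the $i$-dimensional span of $(\textsl{d}f_1)_\xi,\dots,(\textsl{d}f_l)_\xi$ (see~\eqref{eq:N1}). Combined with the general lower bound $\dim\N_j\ge\dim\q-l$ (since $\N_j$ is a component of the zero-fiber of the morphism $\pi_Q\colon\q^*\to\bbk^l$), this yields
\[
\dim p^{-1}(\N_j)\ge\dim p^{-1}(\mathfrak X^o_{i_0,\N_j})=\dim\N_j+\dim\q-i_0\ge 2\dim\q-l-i_0\ge 2(\dim\q-l)+1.
\]

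Finally, since $p^{-1}(\N_j)\subset\NN$ and the expected dimension of $\NN$ is $\dim\qq-\ind\qq=2(\dim\q-l)$, the variety $\NN$ contains an irreducible subset of strictly larger dimension, so some irreducible component of $\NN$ has dimension $>2(\dim\q-l)$, and $\pi_\QQ$ fails to be equidimensional. There is no real obstacle here; the whole content is already packaged in the fiber formula and the irreducibility of $\N_j$, and the only thing to verify carefully is the bound $\dim\N_j\ge\dim\q-l$, which is the standard lower bound for components of fibers of a morphism into affine $l$-space.
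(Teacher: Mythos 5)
Your argument is correct and is essentially the one the paper intends: the Corollary is extracted from the same fibre-dimension formula $\dim p^{-1}(\mathfrak X^o_{i,\N_j})=\dim\mathfrak X^o_{i,\N_j}+\dim\q-i$ over the strata, the observation that $\N_j\cap\Omega_{\q^*}=\varnothing$ forces the dense stratum of $\N_j$ to have index $i_0\le l-1$, and the bound $\dim\N_j\ge\dim\q-l$. Nothing is missing, and the concluding step (an irreducible subset of $\NN$ of dimension $\ge 2(\dim\q-l)+1$ lies in some component, which is therefore too big) is exactly how the paper reads off the failure of equidimensionality.
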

\noindent
We say that such $\N_j$ is a {\it bad\/} irreducible component of $\N$.

\begin{rmk}
If $\N$ is irreducible and $\dim\N=\dim\q-l$, then a similar analysis shows that $\NN$ is irreducible 
{\sl if and only if\/} conditions {\sf (i),\,(ii),} and {\sf (iii)'} hold, where {\sf (i),\,(ii)} are as above, with $\N$ in place of $\N_j$, and the last one is a bit stronger than {\sf (iii)}:
\\
\centerline{ {\sf (iii)'} \ $\codim_{\N}({\mathfrak X}^o_{i,\N})> l-i$ for $i<l$.}
\\
For, the closure of $p^{-1}(\overset{o}{\mathfrak X_{l,\N}})$ is always an irreducible component of $\NN$ of the `right' dimension $2(\dim\q-l)$, and we need the condition that $p^{-1}({\mathfrak X}^o_{i,\N})$ does not yield another component, i.e.,
$\dim p^{-1}({\mathfrak X}^o_{i,\N})<
2(\dim\q-l)$ for $i<l$.
\end{rmk}
 
From now on, we assume that $\q=\g$ is a simple Lie algebra of rank $l$.
Let us recall some properties of the nilpotent cone $\gN\subset\g^*\simeq \g$:
\begin{itemize}
\item \ $\gN$ is irreducible and contains finitely many $G$-orbits;
\item \ ${\mathfrak X}^o_{l,\gN}$ is the {\it principal} (or, {\it regular}) nilpotent orbit;
\item \ ${\mathfrak X}_{l-1,\gN}$ is irreducible of dimension $\dim\gN-2$ and 
${\mathfrak X}^o_{l-1,\gN}\ne \varnothing$  (it contains the {\it subregular} nilpotent orbit as a dense 
open subset). Moreover, if $\deg f_1\le \dots \le \deg f_l$, then 
$\deg f_{l-1}< \deg f_l$ and $(\textsl{d}f_l)_\xi=0$ for all $\xi\in {\mathfrak X}_{l-1,\gN}$.
\end{itemize}

Then $\gNN$ is also irreducible, and for the projection $p:\gNN\to \gN$, we have:
\begin{itemize}
\item[{\bf --}] \ $p^{-1}({\mathfrak X}^o_{l,\gN})$ is the open dense $G\langle 1\rangle$-orbit in 
$\gNN$, of dimension $2(\dim\g-l)$;
\item[{\bf --}]  \ $\dim p^{-1}({\mathfrak X}^o_{l-1,\gN})=2(\dim\g-l)-1$. Hence the closure of 
$p^{-1}({\mathfrak X}^o_{l-1,\gN})$ is a simple divisor, say $D$, in $\gNN$.
By Lemma~\ref{lm:2-stratific}, $p^{-1}({\mathfrak X}^o_{l-1,\gN})\subset 
{\mathfrak X}^o_{2l-2,\gNN}\cup {\mathfrak X}^o_{2l-1,\gNN}$.
\end{itemize}
\noindent
The next iteration replaces $\go$ with  $\goo\simeq\go \ltimes \go^{\sf ab}$
and provides the surjective projection $p_{1}:\gNNN\simeq \gNN\langle 1\rangle \to \gNN$. 
Here we are interested in $p_1^{-1}(D)$.
There is a dichotomy: either {\sf (1)} \ $D\cap{\mathfrak X}^o_{2l-1,\gNN}\ne \varnothing$ or \ 
{\sf (2)} \ $D\subset {\mathfrak X}_{2l-2,\gNN}$. \par
\textbullet \ \ In the first case, $\dim p_1^{-1}(D)=4(\dim\g-l)$ and it is an irreducible component of $\gNNN$ that is different from the closure of 
$p_1^{-1}({\mathfrak X}^o_{2l,\gNN})$. In other words, $p_1^{-1}(D)$ is a bad irreducible
component of $\gNNN$. Hence $\pi_{\g\langle 1,1,1\rangle}$ is not equidimensional by 
Corollary~\ref{cl:not-eq}. \par
\textbullet \ \ In the second case, $\dim p_1^{-1}(D)=4(\dim\g-l)+1$. Hence 
$\pi_{\g\langle 1,1\rangle}$ is already not equidimensional and then $\pi_{\g\langle 1,1,1\rangle}$ is not 
equidimensional, too.

Thus, we have proved

\begin{thm}  \label{thm:eq-multi}
Let $\g$ be a simple Lie algebra. Then 
\begin{itemize}
\item[\sf (i)] \ $\gNNN\subset \g\langle 1,1\rangle^*$ is reducible;
\item[\sf (ii)] \ $\gN\langle 1,1,1\rangle\subset \g\langle 1,1,1\rangle^*$ is not pure, i.e.,
$\pi_{\g\langle 1,1,1\rangle}$ is not equidimensional.
\end{itemize}
\end{thm}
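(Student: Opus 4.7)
The plan is to iterate the Takiff passage twice, from $\g$ to $\go$ and then from $\go$ to $\goo$, and apply Lemma~\ref{lm:2-stratific} together with Corollary~\ref{cl:not-eq} to the stratifications $\{{\mathfrak X}^o_{i,\N}\}$ by differentials of basic invariants at each step. The structural input from simple $\g$ of rank $l$ is classical: the nilpotent cone $\gN$ is irreducible of dimension $\dim\g-l$, the open stratum ${\mathfrak X}^o_{l,\gN}$ is the regular orbit, and ${\mathfrak X}_{l-1,\gN}$ is the (irreducible) closure of the subregular orbit, of codimension $2$ in $\gN$; in particular, generic points of ${\mathfrak X}^o_{l-1,\gN}$ are subregular, hence not regular in $\g$. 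I also use the known irreducibility of $\gNN$ of dimension $2(\dim\g-l)$, cf.~\cite{geof} and~\cite[Thm.\,10.2]{p05}.

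Set $D := \overline{p^{-1}({\mathfrak X}^o_{l-1,\gN})}\subset\gNN$, where $p:\gNN\to\gN$ is the natural projection; this is an irreducible divisor in $\gNN$. Lemma~\ref{lm:2-stratific} places its generic point in ${\mathfrak X}^o_{2l-2,\gNN}\cup{\mathfrak X}^o_{2l-1,\gNN}$, giving a dichotomy. A direct fibre-dimension count for the projection $p_1:\gNNN\to\gNN$ arising from the second iteration $\go\leadsto\goo$ then yields
\[
  \dim p_1^{-1}(D) \;=\; \begin{cases} 4(\dim\g-l), & \text{if } D\cap{\mathfrak X}^o_{2l-1,\gNN}\ne\varnothing,\\ 4(\dim\g-l)+1, & \text{if } D\subset{\mathfrak X}_{2l-2,\gNN}.\end{cases}
\]
Since generic points of $D$ have $\g_{[1]}^*$-component subregular (hence non-regular), Theorem~\ref{thm:main1}(ii) gives $D\cap\Omega_{\go^*}=\varnothing$, so the top-dimensional part of $p_1^{-1}(D)$ avoids $\Omega_{\goo^*}$ and is in particular disjoint from the main irreducible component $\overline{p_1^{-1}({\mathfrak X}^o_{2l,\gNN})}$ of $\gNNN$, which has dimension $4(\dim\g-l)$. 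In the first case this exhibits two distinct top-dimensional components of $\gNNN$; in the second, $\overline{p_1^{-1}(D)}$ is over-sized and forces a component of dimension exceeding $4(\dim\g-l)$. Either way $\gNNN$ is reducible, proving (i).

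For (ii), I apply Corollary~\ref{cl:not-eq} to the further iteration $\goo\leadsto\goo\langle 1\rangle=\g\langle 1,1,1\rangle$, taking $\q=\goo$ and $\N=\gNNN$. The component $\overline{p_1^{-1}(D)}$ of $\gNNN$ is bad in the sense of that corollary: its $p_1$-image lies in $D$, which is disjoint from $\Omega_{\go^*}$, so the whole component avoids $\Omega_{\goo^*}$. Corollary~\ref{cl:not-eq} then produces a subvariety of $\gN\langle 1,1,1\rangle$ of dimension strictly greater than the expected equidimensional value $2(\dim\goo-\ind\goo)=8(\dim\g-l)$, so $\pi_{\g\langle 1,1,1\rangle}$ is not equidimensional. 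The main obstacle is the dichotomy itself: the position of $D$'s generic point in the stratification of $\gNN$ cannot be settled by general principles alone, so both branches must be handled. Fortunately both converge on the same conclusions, uniformly driven by the clean disjointness $D\cap\Omega_{\go^*}=\varnothing$, which follows from subregularity and requires no type-by-type analysis.
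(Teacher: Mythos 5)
Your argument is correct and follows essentially the same route as the paper's: the same divisor $D=\overline{p^{-1}({\mathfrak X}^o_{l-1,\gN})}$ in $\gNN$, the same dichotomy coming from Lemma~\ref{lm:2-stratific}, the same fibre-dimension counts for $p_1$, and the same appeal to the badness criterion of Corollary~\ref{cl:not-eq}. The only cosmetic deviations are that ``disjoint from the main component'' should read ``not contained in it'' (distinctness follows since the two irreducible closed sets of equal dimension have different images under $p_1$), and that in the branch $D\subset{\mathfrak X}_{2l-2,\gNN}$ the paper deduces {\sf (ii)} by observing that $\pi_{\g\langle 1,1\rangle}$ already fails to be equidimensional and propagating this via Proposition~\ref{prop:estimate}, rather than by exhibiting a bad component of $\gNNN$ directly.
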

\noindent

\begin{rmk}
If $\g=\g_1\oplus\dots\oplus\g_s$ is semisimple, where each $\g_i$ is simple and $s\ge 2$, then
$\g\langle m_1,\dots,m_r\rangle \simeq \bigoplus_{i=1}^s \g_i\langle m_1,\dots,m_r\rangle$. Therefore,
Theorem~\ref{thm:eq-multi} readily extends to the semisimple case.
\end{rmk}
Although $\gNNN$ is reducible, it still might be true that $\pi_{\g\langle 1,1\rangle}$ is equidimensional; in 
particular, it is likely that the second case above does not materialise. In fact, we hope (conjecture) that 
$\pi_{\g\langle 1,1\rangle}$ is always equidimensional.

\end{document}